\theoremstyle{definition}
\newtheorem{theorem}{Theorem}[section]
\newtheorem{lemma}[theorem]{Lemma}
\newtheorem{corollary}[theorem]{Corollary}
\newtheorem{definition}[theorem]{Definition}
\newtheorem{question}[theorem]{Question}
\newtheorem{notation}[theorem]{Notation}
\newcommand{\bs}{\mathbf{s}}
\newcommand{\bla}{\boldsymbol\lambda}
\newcommand{\C}{\mathbb{C}}
\newcommand{\Z}{\mathbb{Z}}
\newcommand{\cB}{\mathcal{B}}
\newcommand{\cC}{\mathcal{C}}
\newcommand{\cD}{\mathcal{D}}
\newcommand{\cF}{\mathcal{F}}
\newcommand{\cO}{\mathcal{O}}
\newcommand{\KZ}{\mathsf{KZ}}
\newcommand{\E}{\mathrm{E}}
\newcommand{\F}{\mathrm{F}}
\newcommand{\Hom}{\operatorname{Hom}}
\newcommand{\End}{\operatorname{End}}
\newcommand{\Ext}{\operatorname{Ext}}
\newcommand{\Ind}{\operatorname{Ind}}
\newcommand{\Res}{\operatorname{Res}}
\newcommand{\Irr}{\operatorname{Irr}}
\newcommand{\Id}{\mathrm{Id}}
\newcommand{\Tr}{\mathrm{Tr}}
\newcommand{\sle}{\widehat{\mathfrak{sl}}_e}
\newcommand{\onto}{\twoheadrightarrow}
\begin{document}

\title{The Dipper-Du Conjecture revisited}

\author{Emily Norton}

\keywords{Hecke algebra, rational Cherednik algebra, type A, Dipper-Du conjecture, vertices and sources, Harish-Chandra theory, cuspidal supports}

\subjclass[2010]{16G99, 20C08, 20C30}

\begin{abstract}
We consider vertices, a notion originating in local representation theory of finite groups, for the category $\cO$ of a rational Cherednik algebra and prove the analogue of the Dipper-Du Conjecture for Hecke algebras of symmetric groups in that setting. As a corollary we obtain a new proof of the Dipper-Du Conjecture over $\mathbb{C}$. 
\end{abstract}

\maketitle

\section*{Introduction}
Let $H_q(S_n)$ be the Hecke algebra of the symmetric group with $q$ a primitive $e$-th root of $1$ and let $H_q(S_n)-$mod be the category of finite-dimensional $H_q(S_n)-$modules. If $M\in H_q(S_n)-$mod, a parabolic subgroup $S_\mu\subseteq S_n$ is called a \textit{vertex} of $M$ if $S_\mu$ is minimal with respect to the property that $M$ is isomorphic to a direct summand of a module induced from $H_q(S_\mu)$. The Dipper-Du Conjecture in characteristic $0$ states  that the parabolics of $S_n$ occurring as vertices of indecomposable modules in $H_q(S_n)-$mod are exactly the parabolics isomorphic to $S_e^{\times k}$, $0\leq k\leq \lfloor \frac{n}{e}\rfloor$ \cite{DipperDu}.  The conjecture was first proved by Du by demonstrating the invertibility of a certain norm map on the Hecke algebra \cite{Du}. The complete version of the conjecture over a ground field of characteristic $p\geq 0$, where ``$e$-parabolics" $S_e^{\times k}$ are supplemented by additional ``$e$-$p$-parabolics" when $p>0$, was recently proved by Whitley who defined and computed the vertices of the blocks of $H_q(S_n)$ as bimodules \cite{Whitley}.

When the ground field is $\C$, the quotient functor $\KZ:\cO_c(S_n)\rightarrow H_q(S_n)-$mod from the category $\cO_c(S_n)$ of the rational Cherednik algebra at parameter $c=r/e$, such that $q=\exp(2\pi i c)$, outfits these two categories with a means of passing information back and forth \cite{GGOR}.
A theorem of Wilcox identifies the cuspidal supports of all simple modules in $\cO_c(S_n)$ as the parabolics $S_e^{\times k}$ for $0\leq k\leq \lfloor \frac{n}{e}\rfloor$ -- the same answer as for the vertices of the Hecke algebra \cite{Wilcox}.\footnote{See also \cite{ShanVasserot}.} 
Motivated by this striking coincidence, we look at vertices for the category $\cO_c(S_n)$ of the Cherednik algebra and establish the analogous statement to Dipper-Du's conjecture in that setting (Theorem \ref{VerticesCherednik}). As a corollary, we obtain a new proof of the Dipper-Du Conjecture for the Hecke algebra over $\C$ (Theorem \ref{VerticesHecke}). We identify the vertex of a block in $\cO_c(S_n)$ using the simple modules in the block of minimal cuspidal depth; although the $\KZ$ functor kills these modules, it preserves the vertex of the block via their projective covers. 

We would like to raise the question of what happens if $S_n$ is replaced by an arbitrary complex reflection group $W$: does it remain true that the set of vertices of $H_q(W)$ coincides with the set of parabolic subgroups $\underline{W}\subseteq W$ such that $\cO_c(\underline{W})$ contains a cuspidal simple module? We always have inclusion in one direction: if $L\in\cO_c(W)$ is a simple module such that ${^\cO\Res}^{W}_{\underline{W}}L$ is cuspidal, then $\underline{W}$ is the vertex of the projective cover $P$ of $L$ \cite{GriffethJuteau}. Moreover, the vertex of $P$ is the vertex of $\KZ(P)$ \cite{GriffethJuteau}. Thus projective indecomposable modules in Cherednik category $\cO$ provide a wealth of vertices for Hecke algebras. For instance, combined with Shan-Vasserot's characterization of cuspidal supports for simple modules in $\cO_c(G(\ell,1,n))$ using categorical actions  \cite[Lemma 6.1]{ShanVasserot}, this implies the following observation:
\textit{
If $|\bla,\bs\rangle\in\cF_{e,\bs}$ is killed by the annihilation operators for the Heisenberg and $\sle$ crystals and $|\bla|\leq n$ then for each $0\leq k\leq \lfloor \frac{n-|\bla|}{e}\rfloor$, the parabolic subgroup $G(\ell,1,|\bla|)\times S_e^{\times k}$ of $ G(\ell,1,n)$ is the vertex of a projective indecomposable module $P\in\cO_c(G(\ell,1,n))$ and of $\KZ(P)\in H_q(G(\ell,1,n))-$mod.}
 Here $\bla=(\lambda^1,\dots,\lambda^\ell)$ where $\lambda^j$ are partitions and $|\bla|=\sum_{j=1}^\ell|\lambda^j|$, $\cF_{e,\bs}$ is a level $\ell$ Fock space of rank $e\in\Z_{\geq 2}$ and charge $\bs\in\Z^\ell$, and the parameters $c$ and $q$ are determined from $e$ and $\bs$, see e.g. \cite{GeckJacon}, \cite{ShanVasserot}.
\begin{question}
Let $W$ be a complex reflection group. Is the set of vertices of projective indecomposable modules in $\cO_c(W)$ a complete  set of vertices for $\cO_c(W)$ and $H_q(W)-$mod?
\end{question}


\section{Adjunctions}

We refer to \cite{MacLane} for all category-theoretic notions.
Let $A$ and $B$ be finite-dimensional algebras over a field $k$, and let $\cC=A-$mod and $\cD=B-$mod be the categories of finitely generated left $A-$ and $B-$ modules, respectively. For this section, we suppose we are given exact, biadjoint functors $\E:\cC\rightarrow\cD$ and $\F:\cD\rightarrow\cC$. 
The biadjunction yields a natural transformation of the identity functor on $\cC$:
$$\zeta:\mathbbm{1}_\mathcal{C}\stackrel{\eta}{\longrightarrow}\F\E\stackrel{\varepsilon}{\longrightarrow}\mathbbm{1}_\mathcal{C}$$
where $\eta$ is the unit of the adjunction $(\E,\F)$ and $\varepsilon$ is the counit of the adjunction $(\F,\E)$ . Write $\eta_M$, $\varepsilon_M$, $\zeta_M$ for the components of $\eta$, $\varepsilon$, $\zeta=\varepsilon\eta$ at the object $M\in\cC$.

Recall that $\cC$ has a direct sum decomposition into \textit{blocks}, which are the module categories of the indecomposable direct factors of $A$ as a $k$-algebra.
\begin{lemma}\label{simple}
Suppose $L$ and $L'$ are simple modules in the same block $\cB$ of $\cC$. Then $\zeta_L$ is an isomorphism if and only if $\zeta_{L'}$ is an isomorphism.
\end{lemma}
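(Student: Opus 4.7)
The plan is to recognize $\zeta$ as a natural endomorphism of the identity functor on $\cC=A\text{-mod}$, hence an element $z$ of the center $Z(A)$, and then exploit the local structure of the center of a block. Under the block decomposition $A=\prod_\cB \cB$, the center factors as $Z(A)=\prod_\cB Z(\cB)$, so the question reduces to understanding when the component $z_\cB\in Z(\cB)$ acts invertibly on a simple module of the fixed block $\cB$.

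First I would establish that $Z(\cB)$ is a commutative, finite-dimensional, \emph{local} $k$-algebra: finite-dimensional because it is a subring of $\cB$, and local because its primitive idempotents are exactly the primitive central idempotents of $\cB$, of which there is only one since $\cB$ is indecomposable as an algebra. Let $\mathfrak{m}_\cB$ denote its unique maximal ideal; because $Z(\cB)$ is Artinian local, every element of $\mathfrak{m}_\cB$ is nilpotent.

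Next, for any simple $L\in\cB$, Schur's lemma gives that $D:=\End_\cC(L)$ is a division algebra, and the action of $z_\cB$ on $L$ is recorded by a ring homomorphism $\chi_L:Z(\cB)\to D$. Because $D$ contains no nonzero nilpotents, $\chi_L$ must kill $\mathfrak{m}_\cB$; since $\chi_L(1)=1$ and $Z(\cB)$ is local, this forces $\ker\chi_L=\mathfrak{m}_\cB$, a description that is independent of the choice of simple $L\in\cB$. I would then conclude that $\zeta_L$ is nonzero --- equivalently, by Schur, an isomorphism --- if and only if $z_\cB\notin\mathfrak{m}_\cB$, a condition that depends only on $\cB$; hence it holds for $L$ if and only if it holds for $L'$.

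The only step that requires more than a line is the claim that $Z(\cB)$ is local, and even that is standard, being the usual correspondence between primitive central idempotents of $\cB$ and direct factors of $\cB$ as an algebra. The remaining steps are formal consequences of Schur's lemma and the block decomposition of $A$, and no special property of $\zeta=\varepsilon\eta$ beyond naturality is used.
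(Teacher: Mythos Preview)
Your proof is correct and takes a genuinely different route from the paper's. The paper proceeds via the $\Ext^1$-characterization of blocks: it reduces to a nonsplit extension $0\to L'\to M\to L\to 0$ with $M$ indecomposable, uses that $\End(M)$ is local so $\zeta_M$ is either nilpotent or invertible, rules out nilpotence by a naturality-square contradiction with the assumed invertibility of $\zeta_L$, and then concludes that $\zeta_{L'}$ is invertible by the Five Lemma. You instead invoke the standard identification $\End(\mathbbm{1}_{A\text{-mod}})\cong Z(A)$ to replace $\zeta$ by a central element $z$ and then use only that $Z(\cB)$ is commutative Artinian local. This is cleaner and in fact proves more: once $z_\cB\notin\mathfrak m_\cB$ it is a unit in $Z(\cB)$, so $\zeta_M$ is invertible for \emph{every} $M\in\cB$, not just the simples. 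That immediately yields the paper's subsequent lemma (that $M\mid\F\E(M)$ for all $M$ in the block as soon as $\zeta_L\neq0$ for one simple $L$) without a second diagram chase. The paper's approach trades this economy for being slightly more self-contained, avoiding the identification of natural endomorphisms of the identity with central elements.
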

\begin{proof} Simples $L$ and $L'$ are in the same block $\cB$ if and only if there exist simples $L_1:=L,\;L_2,\;\dots,\;L_{r-1},\;L_r:=L'$ such that $\Ext^1(L_i,L_{i+1})\neq 0$ for all $i=1,\dots,r-1$ \cite[Proposition 13.3]{Alperin}. It therefore suffices to show that given a nonsplit short exact sequence
$$0\longrightarrow L'\stackrel{\iota}{\longrightarrow} M\stackrel{\pi}{\longrightarrow} L\longrightarrow 0$$
with $L,L'$ simple, $\zeta_L$ is an isomorphism if and only if $\zeta_{L'}$ is an isomorphism. We have the following commutative diagram 
whose top and bottom rows are exact:
\begin{equation}\label{natltrans}
 \begin{tikzcd}
0 \arrow{r}{}
& 
 L' \arrow{r}{\iota} \arrow{d}{\zeta_{L'}}
 &
 M \arrow{d}{\zeta_M} \arrow{r}{\pi}
 &
 L \arrow{d}{\zeta_L} \arrow{r}{}
 &
 0
 \\
 0\arrow{r}{}
 &
 L' \arrow{r}{\iota}
 &
 M \arrow{r}{\pi}
 &
 L \arrow{r}{}
 &
 0
 \end{tikzcd}
 \end{equation}
By assumption $M$ is indecomposable, so $\End(M)$ is a local ring, and therefore every element of $\End(M)$ is either nilpotent or invertible. If $\zeta_M$ is nilpotent, then taking $n$ such that $\zeta_M^n=0$, the diagram 
\begin{equation}\label{local}
\begin{tikzcd}
M\arrow{r}{\pi} \arrow{d}{\zeta_M^n=0}
&
L\arrow{d}{\zeta_L^n}
\\
M\arrow{r}{\pi}
&
L
\end{tikzcd}\end{equation}
must commute. But $\zeta_L^n$ is an isomorphism since $\zeta_L$ is, and so $(\zeta_L^n)\pi$ is surjective, while $\pi\zeta^n_M=0$. This is a contradiction, so $\zeta_M$ is an invertible element of $\End(M)$, that is, $\zeta_M:M\rightarrow M$ is an isomorphism. It then follows from the Five Lemma that $\zeta_{L'}$ is also an isomorphism. The converse implication, that $\zeta_L$ is an isomorphism if $\zeta_{L'}$ is, is proved similarly.
\end{proof}
\begin{notation} As in \cite[Section 6.B]{Broue}, if $M,X\in\cC$ and there exist morphisms $\iota:M\rightarrow X$ and $\pi:X\rightarrow M$ such that $\pi\iota=\Id_M$, then we say that $M$ is isomorphic to a direct summand of $X$ and we write $M\mid X$.
\end{notation}

When $\cC=kG-$mod for a finite group $G$, $\cD=kH-$mod for $H\leq G$, and $\F$ and $\E$ are induction and restriction respectively, there are several equivalent ways to detect when $M\mid\F\E(M)$ which go by the name of Higman's criterion. Brou\'{e} recognized that Higman's criterion is simply a statement about exact, biadjoint functors valid in a much more general setting (the following theorem allows $\cC$ and $\cD$ to be any $R$-linear abelian or triangulated categories where $R$ is a commutative ring with $1$). The trace map $\Tr_{\E}^{\F}(M):\End(\E(M))\rightarrow\End(M)$ is defined as \cite[Definition 6.6]{Broue}: $$\Tr_{\E}^{\F}(M)(\beta)=\varepsilon_M\F(\beta)\eta_M.$$
In particular, $\zeta_M=\Tr_{\E}^{\F}(M)(\Id_{\E(M)})$. 
\begin{theorem}\cite[Theorem 6.8]{Broue}\label{Higmans}
For an object $M\in\cC$, the following are equivalent.
\begin{enumerate}
\item $M\mid\F\E(M)$;
\item $M\mid\F(N)$ for some $N\in\cD$;
\item The morphism $\Id_M$ is in the image of $\Tr_{\E}^{\F}(M)$;
\item The morphism $\eta_M:M\rightarrow\F\E(M)$ has a left inverse;
\item The morphism $\varepsilon_M:\F\E(M)\rightarrow M$ has a right inverse.
\end{enumerate}
\end{theorem}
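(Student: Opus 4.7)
The plan is to prove the equivalences via the cycle $(3)\Rightarrow(4)\Rightarrow(1)\Rightarrow(2)\Rightarrow(3)$ together with $(3)\Leftrightarrow(5)$, exploiting the two different unit--counit pairs that the biadjointness provides.

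First I would handle $(3)\Rightarrow(4)$ and $(3)\Rightarrow(5)$ directly from the definition of the trace map. Suppose $\Id_M=\varepsilon_M\F(\beta)\eta_M$ for some $\beta\in\End(\E(M))$. Then $\varepsilon_M\F(\beta)\colon\F\E(M)\to M$ is immediately a left inverse for $\eta_M$, giving $(4)$, and $\F(\beta)\eta_M\colon M\to\F\E(M)$ is a right inverse for $\varepsilon_M$, giving $(5)$. The converse implications $(4)\Rightarrow(1)$ and $(5)\Rightarrow(1)$ are tautological, since the left inverse to $\eta_M$ (respectively the right inverse to $\varepsilon_M$) together with $\eta_M$ (respectively $\varepsilon_M$) realises $M$ as a direct summand of $\F\E(M)$. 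The implication $(1)\Rightarrow(2)$ is trivial by taking $N=\E(M)$.

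The main step is $(2)\Rightarrow(3)$, and here I would use both adjunctions in parallel. Assume $M\mid\F(N)$ via morphisms $\iota\colon M\to\F(N)$ and $\pi\colon\F(N)\to M$ with $\pi\iota=\Id_M$. Using the adjunction $(\E,\F)$, the unit $\eta$ yields a natural bijection $\Hom_\cC(M,\F(N))\cong\Hom_\cD(\E(M),N)$, so there is a unique $\tilde\iota\colon\E(M)\to N$ with $\iota=\F(\tilde\iota)\eta_M$. Using the adjunction $(\F,\E)$, whose counit is $\varepsilon$, the bijection $\Hom_\cC(\F(N),M)\cong\Hom_\cD(N,\E(M))$ produces a unique $\tilde\pi\colon N\to\E(M)$ with $\pi=\varepsilon_M\F(\tilde\pi)$. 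Setting $\beta:=\tilde\pi\tilde\iota\in\End(\E(M))$, functoriality of $\F$ gives
\[
\Tr_\E^\F(M)(\beta)=\varepsilon_M\F(\tilde\pi\tilde\iota)\eta_M=\varepsilon_M\F(\tilde\pi)\F(\tilde\iota)\eta_M=\pi\iota=\Id_M,
\]
which is $(3)$.

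The main obstacle is making sure the two mate correspondences are tied to the correct unit and counit; in a biadjoint situation it is easy to confuse the unit of $(\E,\F)$ with that of $(\F,\E)$. The key conceptual point is that the formula $\Tr_\E^\F(M)(\beta)=\varepsilon_M\F(\beta)\eta_M$ mixes one unit and one counit coming from \emph{different} adjunctions, and the proof of $(2)\Rightarrow(3)$ succeeds precisely because we are allowed to factor $\iota$ through $\eta_M$ using one adjunction while factoring $\pi$ through $\varepsilon_M$ using the other. Once this is correctly set up, the computation reduces to a one-line diagram chase.
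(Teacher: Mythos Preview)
The paper does not actually prove this theorem; it is quoted verbatim from Brou\'e \cite[Theorem~6.8]{Broue} and used as a black box, so there is no ``paper's own proof'' to compare against. That said, your argument is correct and is essentially the standard proof one finds in Brou\'e's paper: the cycle $(3)\Rightarrow(4)\Rightarrow(1)\Rightarrow(2)\Rightarrow(3)$ together with $(3)\Rightarrow(5)\Rightarrow(1)$ establishes all equivalences, and your handling of the key step $(2)\Rightarrow(3)$---factoring $\iota$ through $\eta_M$ via the $(\E,\F)$-adjunction and $\pi$ through $\varepsilon_M$ via the $(\F,\E)$-adjunction, then composing the mates---is exactly the intended mechanism. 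Your closing remark about keeping the two unit--counit pairs straight is well taken and is precisely the point of the construction.
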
\noindent
There are two more conditions in Brou\'{e}'s theorem generalizing the notion of relative projectivity and injectivity of maps, but we omit these here. Note that the criteria in Theorem \ref{Higmans} do not imply that $\zeta_M$ has an inverse.

\begin{corollary}\label{captainobvious}
Let $M\in\cC$. If $\zeta_M$ is an isomorphism then $M\mid\F\E (M)$.
\end{corollary}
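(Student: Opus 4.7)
The plan is to derive this immediately from Theorem \ref{Higmans}, which provides several equivalent characterizations of the splitting condition $M \mid \F\E(M)$. The relevant characterizations for us are (4) and (5): that $\eta_M$ admits a left inverse, or equivalently that $\varepsilon_M$ admits a right inverse.

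Since by hypothesis $\zeta_M = \varepsilon_M \circ \eta_M$ is invertible in $\End(M)$, I would simply exhibit an explicit left inverse to $\eta_M$. Namely, set $\sigma := \zeta_M^{-1} \circ \varepsilon_M : \F\E(M) \to M$. Then
\begin{equation*}
\sigma \circ \eta_M \;=\; \zeta_M^{-1} \circ \varepsilon_M \circ \eta_M \;=\; \zeta_M^{-1} \circ \zeta_M \;=\; \Id_M,
\end{equation*}
so $\eta_M$ has a left inverse. By condition (4) of Theorem \ref{Higmans}, this is equivalent to $M \mid \F\E(M)$.

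There is essentially no obstacle here; the statement is a one-line consequence of Brou\'{e}'s theorem. The content of the corollary is really the observation that invertibility of $\zeta_M$ is a \emph{strictly stronger} condition than the Higman-type criteria (as already noted in the remark following Theorem \ref{Higmans}): the splitting $\eta_M \circ \sigma$ of $\F\E(M)$ constructed this way need not equal the identity on the image of $\eta_M$ unless $\zeta_M$ itself is invertible, but for the weaker conclusion $M \mid \F\E(M)$ one only needs a one-sided inverse of $\eta_M$, which $\sigma$ provides. One could equivalently note that $\eta_M \circ \zeta_M^{-1}$ is a right inverse to $\varepsilon_M$ and invoke condition (5) instead.
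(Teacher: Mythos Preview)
Your proof is correct and is exactly the kind of one-line argument the paper intends: the corollary is stated without proof (the label \texttt{captainobvious} is a tell), and the paper views it as immediate from Theorem~\ref{Higmans}, just as you do by producing the left inverse $\zeta_M^{-1}\varepsilon_M$ of $\eta_M$.
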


\begin{lemma}\label{mofo}
Let $\cB$ be a block of $\cC$. Suppose there exists a simple module $L\in\cB$ such that $\zeta_L\neq 0$. Then $M\mid\F\E (M)$ for every $M\in\cB$.
\end{lemma}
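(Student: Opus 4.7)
The plan is to upgrade Lemma \ref{simple} from the simple case to arbitrary objects of $\cB$, essentially by iterating the ``local endomorphism ring'' argument already used there and then invoking Corollary \ref{captainobvious}.

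First I would observe that the hypothesis $\zeta_L \neq 0$ is equivalent to $\zeta_L$ being an isomorphism, since $L$ is simple so $\End(L)$ is a division ring by Schur's lemma and every nonzero element is invertible. Lemma \ref{simple} then promotes this to: $\zeta_{L'}$ is an isomorphism for \emph{every} simple $L' \in \cB$.

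Next I would reduce to the indecomposable case. Write $M = \bigoplus_i M_i$ with each $M_i$ indecomposable; since each $M_i$ is a direct summand of $M \in \cB$, each $M_i$ lies in $\cB$, and it suffices to prove $M_i \mid \F\E(M_i)$ for every $i$. So assume $M \in \cB$ is indecomposable, hence $\End(M)$ is local and $\zeta_M$ is either nilpotent or invertible.

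The heart of the argument, and the only step with any content, is to rule out nilpotence. Suppose $\zeta_M^n = 0$ for some $n \geq 1$. Since $A$ is finite-dimensional, $M$ has a simple quotient $\pi : M \onto L$, and $L \in \cB$ because $\cB$ is closed under taking composition factors. Naturality of $\zeta$ gives $\pi \circ \zeta_M = \zeta_L \circ \pi$, and iterating yields the commutative square
\begin{equation*}
\begin{tikzcd}
M \arrow{r}{\pi} \arrow{d}{\zeta_M^n = 0} & L \arrow{d}{\zeta_L^n} \\
M \arrow{r}{\pi} & L
\end{tikzcd}
\end{equation*}
so $\zeta_L^n \circ \pi = 0$. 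Since $\pi$ is surjective, $\zeta_L^n = 0$, contradicting the fact from the first step that $\zeta_L$ is an isomorphism. Therefore $\zeta_M$ is invertible, and Corollary \ref{captainobvious} gives $M \mid \F\E(M)$.

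There is no serious obstacle; the proof is almost a verbatim rerun of the key diagram \eqref{local} in Lemma \ref{simple}, the only new ingredient being the remark that any $M \in \cB$ admits a simple quotient lying in $\cB$, which lets the contradiction be generated from a single simple in the block rather than from a neighbor in an $\Ext^1$-chain.
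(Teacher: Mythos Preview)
Your proof is correct and follows essentially the same approach as the paper's: reduce to $M$ indecomposable, take a simple quotient $L$ of $M$, and use the commuting square (\ref{local}) to derive a contradiction from nilpotence of $\zeta_M$, then apply Corollary \ref{captainobvious}. The only difference is that you spell out explicitly the step of invoking Lemma \ref{simple} to upgrade the hypothesis from a single simple to all simples in $\cB$, which the paper leaves implicit.
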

\begin{proof} It suffices to consider the case that $M$ is indecomposable. Consider diagram (\ref{local}) above with $L$ taken to be any simple module in the head of $M$, then make the same argument as in the proof of Lemma \ref{simple} to conclude that $\zeta_M$ is an isomorphism. By Corollary \ref{captainobvious}, then $M\mid\F\E (M)$.
\end{proof}

Given what conditions on $M$ does $M\mid\F\E(M)$ imply that $\zeta_M$ is an isomorphism? 
A condition is given in the proof of \cite[Corollary 3.3]{GriffethJuteau} which is concerned with certain $1$-dimensional modules over Hecke algebras but is more generally valid. Here is the statement and an alternative proof in our more general set-up.

\begin{lemma}\label{zeta} Suppose $\dim\End(\E (M))=1$. Then $M\mid \F\E(M)$ if and only if $\zeta_M$ is a nonzero multiple of $\Id_M$. 
\end{lemma}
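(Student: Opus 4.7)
The plan is to reduce both directions to Higman's criterion (Theorem \ref{Higmans}) using the $k$-linearity of the trace map $\Tr_{\E}^{\F}(M)$. The point is that in this 1-dimensional setting, the image of the trace map is determined entirely by the single value $\zeta_M = \Tr_{\E}^{\F}(M)(\Id_{\E(M)})$.

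First I would note that the hypothesis $\dim \End(\E(M)) = 1$ implies $\E(M) \neq 0$, so $\Id_{\E(M)}$ is a nonzero element of the one-dimensional space $\End(\E(M))$, and hence $\End(\E(M)) = k \cdot \Id_{\E(M)}$. I would also observe that $\Tr_{\E}^{\F}(M)$ is $k$-linear: this is immediate from its definition $\Tr_{\E}^{\F}(M)(\beta) = \varepsilon_M \F(\beta) \eta_M$ together with the additivity (hence $k$-linearity) of the functor $\F$.

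For the forward direction, assume $M \mid \F\E(M)$. By the equivalence (1)$\Leftrightarrow$(3) in Theorem \ref{Higmans}, there exists $\beta \in \End(\E(M))$ with $\Tr_{\E}^{\F}(M)(\beta) = \Id_M$. Writing $\beta = \lambda \Id_{\E(M)}$ with $\lambda \in k$ and applying linearity gives $\lambda \zeta_M = \Id_M$, which forces $\lambda \neq 0$ and $\zeta_M = \lambda^{-1} \Id_M$. Conversely, if $\zeta_M = \mu \Id_M$ with $\mu \neq 0$, then $\Tr_{\E}^{\F}(M)(\mu^{-1} \Id_{\E(M)}) = \mu^{-1} \zeta_M = \Id_M$, so $\Id_M \in \im \Tr_{\E}^{\F}(M)$, and the implication (3)$\Rightarrow$(1) of Theorem \ref{Higmans} gives $M \mid \F\E(M)$.

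There is really no serious obstacle here: the argument is a one-line application of Higman once one has identified $\End(\E(M))$ as being spanned by the identity. The only thing to double-check is the linearity of $\Tr_{\E}^{\F}(M)$, which is built into Brou\'e's formalism, and the fact that a nonzero scalar multiple of $\Id_M$ is invertible — which is exactly why the conclusion pins $\zeta_M$ down as an isomorphism rather than merely a surjection.
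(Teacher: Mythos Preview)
Your proof is correct and follows essentially the same route as the paper: both identify $\End(\E(M)) = k\cdot\Id_{\E(M)}$, observe that the image of $\Tr_{\E}^{\F}(M)$ is therefore $k\cdot\zeta_M$, and then invoke the equivalence (1)$\Leftrightarrow$(3) of Theorem~\ref{Higmans}. The paper's version is slightly terser (it just writes out $\Tr^\F_\E(M)(b\cdot\Id_{\E(M)}) = b\zeta_M$ in one line), but the logic is identical.
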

\begin{proof}
 Since $\dim\End(\E(M))=1$, for any $\beta\in\End(\E(M))$ we have $\beta=b\cdot\Id_{\E(M)}$ for some $b\in k$. Then:
$$
\Tr^\F_\E(M)(\beta)=\varepsilon_M(\F(b\cdot \Id_{\E(M)})\eta_M=\varepsilon_M(b\cdot\Id_{\F\E(M)})\eta_M=b\varepsilon_M\eta_M=b\zeta_M
$$
Therefore $\Id_M$ is in the image of $\Tr^\F_\E(M)$ if and only if $\zeta_M$ is a nonzero multiple of $\Id_M$. By Theorem \ref{Higmans}, $M\mid\F\E(M)$ if and only if $\Id_M$ is in the image of $\Tr^\F_\E(M)$.
\end{proof}
\noindent The image of the trace map is a two-sided ideal in $\End(M)$ \cite[Proposition 6.7]{Broue}, so in the event the conditions in Lemma \ref{zeta} all hold then $\dim\End(M)=1$ as well.

\section{Vertices for Cherednik and Hecke algebras of symmetric groups}
 The ground field for the rest of the paper is $\mathbb{C}$. 
\subsection{Vertices for category $\cO$ of the Cherednik algebra} 
The material in this section is mostly a copy-paste of the definition and basic properties of vertices from categories such as $kG-$mod for $G$ a finite group together with group induction and restriction, or unipotent representations of a finite group of Lie type in cross characteristic together with Harish-Chandra induction and restriction.  We include detailed proofs for completeness. 

 Let $W$ be a complex reflection group, let $c:\{\hbox{Reflections in W}\}\rightarrow \mathbb{C}$ be a conjugation-invariant function, and let $\cO_c(W)$ be the category $\cO$ of the rational Cherednik algebra defined in \cite{GGOR}.
 This is a highest weight category \cite{GGOR}, so it occurs as the category of finitely generated modules for a quasi-hereditary algebra \cite{CPS}; it has simple, Verma, and projective indecomposable modules in bijection with $\Irr_\C(W)$ \cite{GGOR}.
 
 Let $\underline{W}\subseteq W$ be a parabolic subgroup. Parabolic induction and restriction functors 
$$
{^\cO\Ind}_{\underline{W}}^W:\cO_c(\underline{W})\longrightarrow\cO_c(W)\quad\hbox{and}\quad{^\cO\Res}_{\underline{W}}^W:\cO_c(W)\longrightarrow\cO_c(\underline{W})
$$
\noindent  were defined by Bezrukavnikov and Etingof \cite{BezrukavnikovEtingof}.  
The functors ${^\cO\Ind}_{\underline{W}}^W$ and ${^\cO\Res}_{\underline{W}}^W$ are exact and biadjoint \cite{BezrukavnikovEtingof},\cite{Shan},\cite{Losev2013}. Therefore:
\begin{lemma}For any parabolic subgroup $\underline{W}\subseteq W$, Theorem \ref{Higmans} applies to\\ $\cC=\cO_c(W)$ and $\cD=\cO_c(\underline{W})$ with $\E= {^\cO\Res}_{\underline{W}}^W$ and $\F={^\cO\Ind}_{\underline{W}}^W$,
giving equivalent conditions for when $M\mid{^\cO\Ind}_{\underline{W}}^W\;{^\cO\Res}_{\underline{W}}^W M$.
\end{lemma}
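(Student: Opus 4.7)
The lemma is essentially a direct invocation of Theorem \ref{Higmans}, so the proof plan is simply to verify that the hypotheses of that theorem are met in the Cherednik category $\cO$ setting. The plan is as follows.

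First, I would observe that $\cO_c(W)$ and $\cO_c(\underline{W})$ are equivalent to categories of finitely generated modules over finite-dimensional (in fact, quasi-hereditary) algebras, by the structure theory of GGOR \cite{GGOR} combined with the Cline-Parshall-Scott characterization of highest weight categories \cite{CPS}, both of which are cited just before the lemma. So the ambient categorical setup of Section 1 (with $k = \C$) applies verbatim.

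Next, I would collect from the paragraph immediately preceding the lemma the two key properties of the parabolic induction/restriction functors that we need: ${^\cO\Ind}_{\underline{W}}^W$ and ${^\cO\Res}_{\underline{W}}^W$ are exact, and they form a biadjoint pair in both orders. These are the results of Bezrukavnikov--Etingof \cite{BezrukavnikovEtingof}, Shan \cite{Shan}, and Losev \cite{Losev2013} already cited.

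Finally, having matched the hypotheses, I would set $\E = {^\cO\Res}_{\underline{W}}^W$ and $\F = {^\cO\Ind}_{\underline{W}}^W$ and conclude by directly quoting Theorem \ref{Higmans}, yielding the equivalence of the five conditions characterizing when $M \mid {^\cO\Ind}_{\underline{W}}^W\,{^\cO\Res}_{\underline{W}}^W M$. There is essentially no obstacle here: the lemma is bookkeeping, asserting that the general categorical machinery of Section 1 is indeed available in Cherednik category $\cO$. The genuine content lies in the cited references establishing exactness and biadjointness, not in any new argument.
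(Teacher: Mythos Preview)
Your proposal is correct and matches the paper's approach exactly: the paper gives no proof beyond the word ``Therefore:'' preceding the lemma, relying on the immediately prior sentence that ${^\cO\Ind}_{\underline{W}}^W$ and ${^\cO\Res}_{\underline{W}}^W$ are exact and biadjoint. Your write-up simply spells out this inference in more detail.
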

\begin{definition} A \textit{vertex} of $M\in\cO_c(W)$ is a minimal parabolic subgroup $\underline{W}\subseteq W$ such that $M\mid{^\cO\Ind}_{\underline{W}}^W N$ for some $N\in\cO_c(\underline{W})$.
\end{definition}

In the classical setting of $kG$-mod where $G$ is a finite group and $k$ has characteristic $p$, it is the Mackey formula that implies the uniqueness of the vertices of indecomposable $kG$-modules up to conjugacy.
Recall that if $H$ and $K$ are subgroups of a finite group $G$ and $V$ is a $kH$-module, then the Mackey formula states: 
$$\Res^G_K\Ind^G_{H}(V)=\bigoplus_{u\in K\backslash G/H}\Ind^K_{uHu^{-1}\cap K}\Res^{uHu^{-1}}_{uHu^{-1}\cap K}({^uV})$$
where ${^uV}:=u\otimes V$, a natural $uHu^{-1}$-module, see e.g. \cite[Lemma 8.7]{Alperin}. 
In the Hecke and Cherednik algebra versions of the Mackey formula, one takes $W$ in place of $G$ and two parabolic subgroups $W_1$ and $W_2$ in place of $H$ and $K$; the group induction and restriction functors are replaced by the appropriate parabolic induction and restriction functors. Kuwabara-Miyachi-Wada prove the Mackey formula for Hecke and Cherednik algebras when $W=G(\ell,1,n)$ (for $H_q(W)-$mod see \cite[Theorem 3.12]{KMW} and for $\cO_c(W)$ see \cite[Theorem 5.6]{KMW}), and they conjecture that the Mackey formula holds in $\cO_c(W)$ for arbitrary complex reflection groups $W$ \cite[Conjecture 0.1]{KMW}. Losev and Shelley-Abrahamson prove that when $W$ is a finite Coxeter group, the Mackey formula holds for  
$\cO_c(W)$ \cite[Proposition 2.7.2]{LosevSA} by lifting it using the $\KZ$ functor from the formula for the Hecke algebra known in this case by \cite[Proposition 9.1.8]{GeckPfeiffer}.
The precise formulas read \cite{GeckPfeiffer},\cite{KMW},\cite{LosevSA}:

\begin{align*}
\Res^{H_q(W)}_{H_q(W_2)}\Ind^{H_q(W)}_{H_q(W_1)}&\cong \bigoplus_{u\in W_2\backslash W/W_1}\Ind^{H_q(W_2)}_{H_q(W_2\cap uW_1u^{-1})}\circ \; u(-)\circ\Res^{H_q(W_1)}_{H_q(u^{-1}W_2u\cap W_1)}\\
{^\cO\Res}^W_{W_2}{^\cO\Ind}^W_{W_1}&\cong \bigoplus_{u\in W_2\backslash W/W_1}{^\cO\Ind}^{W_2}_{W_2\cap uW_1u^{-1}}\circ u(-)\circ{^\cO\Res}^{W_1}_{u^{-1}W_2u\cap W_1}
\end{align*}
The functor $u(-)$ is an equivalence induced by conjugation by $u$. \textit{From now on, we will always assume the Mackey formula holds for $\cO_c(W)$ and $H_q(W)-$mod.} In particular, it holds for $W=S_n$ since $S_n$ is a Coxeter group and $S_n=G(1,1,n)$.

Now as in \cite[Theorem 5.1.2]{Linckelmann} the Mackey formula implies uniqueness of vertices up to conjugacy; the proof for $kG$-modules also works for Hecke and Cherednik algebras. We give the proof anyway:
\begin{lemma}\label{Mackey}
 Let $M\in\cO_c(W)$ or $H_q(W)-$mod. Then a vertex of $M$ is unique up to $W$-conjugacy.
\end{lemma}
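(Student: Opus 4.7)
The plan is to transcribe the classical group-theoretic argument (as in \cite[Theorem 5.1.2]{Linckelmann}) into this setting, substituting the parabolic induction/restriction functors for their group-theoretic counterparts and invoking the Mackey formula that has just been taken as a standing hypothesis. Since the notion of vertex is only nontrivial for indecomposable modules, I first reduce to the case that $M$ is indecomposable.

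Let $\underline{W}_1$ and $\underline{W}_2$ be two vertices of $M$. By definition, $M\mid{^\cO\Ind}^W_{\underline{W}_1}N$ for some $N\in\cO_c(\underline{W}_1)$, while Theorem \ref{Higmans} applied to $\underline{W}_2$ yields $M\mid{^\cO\Ind}^W_{\underline{W}_2}\,{^\cO\Res}^W_{\underline{W}_2}M$. First I would restrict the relation for $\underline{W}_1$ along ${^\cO\Res}^W_{\underline{W}_2}$ and expand via the Mackey formula:
$${^\cO\Res}^W_{\underline{W}_2}M\;\Big|\;\bigoplus_{u\in\underline{W}_2\backslash W/\underline{W}_1}{^\cO\Ind}^{\underline{W}_2}_{\underline{W}_2\cap u\underline{W}_1 u^{-1}}\bigl(u\cdot{^\cO\Res}^{\underline{W}_1}_{u^{-1}\underline{W}_2 u\cap\underline{W}_1}N\bigr).$$
Applying ${^\cO\Ind}^W_{\underline{W}_2}$, invoking transitivity of parabolic induction, and composing with the relation $M\mid{^\cO\Ind}^W_{\underline{W}_2}\,{^\cO\Res}^W_{\underline{W}_2}M$ then exhibits $M$ as a direct summand of a finite direct sum of modules of the form ${^\cO\Ind}^W_{\underline{W}_2\cap u\underline{W}_1 u^{-1}}(\cdots)$. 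By Krull-Schmidt, $M$ is a summand of a single such term, indexed by some double coset representative $u_0$.

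The crucial structural input is Steinberg's theorem: an intersection of parabolic subgroups of a complex reflection group is again parabolic. Hence $\underline{W}_2\cap u_0\underline{W}_1 u_0^{-1}$ is a parabolic subgroup of $W$ contained in $\underline{W}_2$, and the minimality clause in the definition of a vertex forces $\underline{W}_2 = \underline{W}_2\cap u_0\underline{W}_1 u_0^{-1}$, i.e., $\underline{W}_2\subseteq u_0\underline{W}_1 u_0^{-1}$. The symmetric argument, swapping the roles of $\underline{W}_1$ and $\underline{W}_2$, yields $\underline{W}_1\subseteq u_1\underline{W}_2 u_1^{-1}$ for some $u_1$. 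Comparing orders upgrades both inclusions to equalities, giving the desired $W$-conjugacy.

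I do not anticipate a serious obstacle beyond careful bookkeeping: one must check that each parabolic induction functor in the chain is well-defined (handled uniformly by Steinberg) and that the ``is a direct summand of'' relation composes correctly through exact functors. The proof for $H_q(W)-$mod proceeds verbatim, with the Hecke-algebraic versions of induction, restriction, and Mackey replacing their Cherednik counterparts.
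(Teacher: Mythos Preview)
Your proposal is correct and follows essentially the same classical Mackey-formula argument as the paper: combine Higman's criterion with the Mackey decomposition, use Krull--Schmidt to isolate a single term, and then invoke minimality and symmetry to obtain conjugacy. The only cosmetic differences are that you reduce to indecomposable $M$ up front and apply Krull--Schmidt directly to $M$ (and explicitly cite Steinberg's theorem), whereas the paper instead works with an indecomposable summand $D$ of the restriction; the logic is otherwise identical.
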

\begin{proof} Let $\cC(W)$ be $\cO_c(W)$ or $H_q(W)-$mod and let $M\in\cC(W)$. 
Write $\Ind$ and $\Res$ for the appropriate parabolic induction and restriction functors for the chosen category. Let $W'$ be a vertex of $M$. By Theorem \ref{Higmans}, $M\mid\Ind^W_{W'}\Res_{W'}^W M$. Let $D\in\cC(W')$ be a direct summand of $\Res^W_{W'}M$ such that $M\mid\Ind^W_{W'}D$.  Suppose $W''$ is another vertex of $M$ and let $E\in\cC(W'')$ such that $M\mid\Ind_{W''}^W E$. Then
$$ D\mid \Res_{W'}^W\Ind_{W''}^W E=\bigoplus_{u\in W'\backslash W/W''}\Ind^{W'}_{W'\cap uW''u^{-1}}\circ\; u(-)\circ\Res^{W''}_{u^{-1} W'u\cap W''}E$$
The minimality of $W'$ implies that $D$ is not a direct summand of $\Ind_{W'\cap uW''u^{-1}}^{W'}X$ whenever $\left(W'\cap uW''u^{-1}\right)\subsetneq W'$, since otherwise $M\mid\Ind^W_{W'\cap uW''u^{-1}}X$ by transitivity. This forces $W'\leq uW''u^{-1}$ for some $u$. Repeating the argument with the roles of $W'$ and $W''$ switched, we conclude that $W'$ and $W''$ are conjugate.
\end{proof}

The vertices of projective indecomposable modules are closely related to the branching rules for simple modules. 
\begin{definition}\cite{BezrukavnikovEtingof} A module $M\in\cO_c(W)$ is called \textit{cuspidal} if ${^\cO\Res}_{\underline{W}}^W M=0$ for all parabolics $\underline{W}\subsetneq W$.
\end{definition}

\begin{definition}\cite{LosevSA} Let $L\in\cO_c(W)$ be a simple module. A \textit{cuspidal support} of $L$ is a pair $(W',L')$, where $W'\subseteq W$ is a parabolic subgroup and $L'\in\cO_c(W')$ is a simple cuspidal module, such that ${^\cO\Ind}_{W'}^W L'\onto L$.
\end{definition}\noindent
The Mackey formula implies that cuspidal supports of simple modules are unique up to $W$-conjugacy \cite[Proposition 3.1.2]{LosevSA}. 

 The following lemma is well-known for unipotent representations of a finite reductive group in cross-characteristic endowed with Harish-Chandra induction and restriction, see e.g. \cite[Proposition 10.6]{BeijingLectures}, and the proof for Cherednik algebras works exactly the same way. Part of the statement was shown in \cite[Lemma 3.2]{GriffethJuteau}.
\begin{lemma}\label{PIM}
Let $L\in\cO_c(W)$ be a simple module and $P$ its projective cover, and let $(W',L')$ be a cuspidal support of $L$. Let $P'$ be the projective cover of $L'$. Then $P\mid{^\cO\Ind}_{W'}^W P'$ and $W'$ is a vertex of $P$.
\end{lemma}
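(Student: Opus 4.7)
My plan is to prove the statement in two parts. First I would show $P \mid {^\cO\Ind}_{W'}^W P'$. Since ${^\cO\Ind}_{W'}^W$ is left biadjoint to the exact functor ${^\cO\Res}_{W'}^W$, it preserves projectives, so ${^\cO\Ind}_{W'}^W P'$ is a projective module in $\cO_c(W)$. Applying the exact functor ${^\cO\Ind}_{W'}^W$ to the surjection $P' \twoheadrightarrow L'$ and composing with the cuspidal-support surjection ${^\cO\Ind}_{W'}^W L' \twoheadrightarrow L$ yields a surjection ${^\cO\Ind}_{W'}^W P' \twoheadrightarrow L$. Since $\cO_c(W)$ is Krull--Schmidt and the projective cover $P$ of $L$ is a direct summand of every projective module surjecting onto $L$, this gives $P \mid {^\cO\Ind}_{W'}^W P'$.

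For the second part, I would show that $W'$ is a vertex of $P$. The first part exhibits $W'$ as a parabolic from which an induction has $P$ as a direct summand, so by the minimality argument used in the proof of Lemma \ref{Mackey} (Mackey formula applied to ${^\cO\Res}_{V}^W \, {^\cO\Ind}_{W'}^W P'$), the vertex $V$ of $P$ satisfies $V \subseteq W'$ up to $W$-conjugacy. It remains to rule out strict inclusion, so assume for contradiction that $V \subsetneq W'$.

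Because $V$ is a vertex, $P \mid {^\cO\Ind}_V^W N$ for some $N \in \cO_c(V)$. Composing with $P \twoheadrightarrow L$ gives ${^\cO\Ind}_V^W N \twoheadrightarrow L$, and biadjointness yields $\Hom(N, {^\cO\Res}_V^W L) \neq 0$, hence ${^\cO\Res}_V^W L \neq 0$. I would then descend iteratively: pick a simple quotient $L''$ of ${^\cO\Res}_V^W L$, so $L$ is a quotient of ${^\cO\Ind}_V^W L''$; if $L''$ is not cuspidal, pick a proper parabolic $V_1 \subsetneq V$ with ${^\cO\Res}_{V_1}^V L'' \neq 0$, take a simple quotient $L'''$, and apply transitivity of parabolic induction to see that $L$ is a quotient of ${^\cO\Ind}_{V_1}^W L'''$. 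Since $|V|$ strictly decreases at each step, the iteration terminates at a cuspidal pair $(V_0, L_0)$ with $V_0 \subseteq V$ and ${^\cO\Ind}_{V_0}^W L_0 \twoheadrightarrow L$, so $(V_0, L_0)$ is a cuspidal support of $L$. Uniqueness of cuspidal supports up to $W$-conjugacy \cite[Proposition 3.1.2]{LosevSA} forces $|V_0| = |W'|$, contradicting $V_0 \subseteq V \subsetneq W'$. Therefore $V = W'$ up to conjugacy, so $W'$ is a vertex of $P$.

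The step I expect to be the main obstacle is the descent producing a cuspidal support inside $V$: it depends on transitivity of parabolic induction in $\cO_c$ and on the finite-length property of $\cO_c(W)$ guaranteeing that simple quotients exist and the iteration terminates.
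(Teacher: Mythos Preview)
Your proposal is correct in outline and follows essentially the same route as the paper, but there is a slip in the adjunction direction during the descent. If $L''$ is a simple \emph{quotient} of ${^\cO\Res}_V^W L$, then biadjointness gives
\[
\Hom({^\cO\Res}_V^W L,\,L'')\;\cong\;\Hom(L,\,{^\cO\Ind}_V^W L''),
\]
so $L$ embeds into ${^\cO\Ind}_V^W L''$ rather than being a quotient of it. Iterating your construction then yields $L\hookrightarrow{^\cO\Ind}_{V_0}^W L_0$, which is not a cuspidal support in the sense of the paper (the definition requires a surjection ${^\cO\Ind}_{V_0}^W L_0\twoheadrightarrow L$), and the uniqueness result you cite from \cite{LosevSA} is stated for that definition. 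The fix is immediate: take simple \emph{submodules} at each step. Then the $({^\cO\Ind},{^\cO\Res})$-adjunction produces the surjections you claim, and exactness of ${^\cO\Res}$ together with transitivity propagates the inclusion $L'''\hookrightarrow{^\cO\Res}^W_{V_1}L$ at the next stage, so the descent terminates at a genuine cuspidal support $(V_0,L_0)$ with $V_0\subseteq V\subsetneq W'$, giving the desired contradiction.

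Compared with the paper, your second part is more explicit. The paper does not run the descent; it restricts attention to parabolics $\underline{W}\subseteq W'$, observes that $P\mid{^\cO\Ind}^W_{\underline{W}}M$ forces ${^\cO\Res}^W_{\underline{W}}L\neq 0$, and concludes $\underline{W}=W'$ in one line---implicitly using that the cuspidal-support parabolic is minimal with nonzero restriction of $L$. Your iterated descent essentially reproves that minimality from scratch, which is fine and more self-contained. Both arguments then rely on the Mackey step (Lemma~\ref{Mackey}) to pass from ``no proper sub-parabolic of $W'$ works'' to ``$W'$ is a vertex''; you make this explicit, the paper leaves it implicit. The transitivity and finite-length facts you flag as potential obstacles are standard and pose no difficulty.
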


\begin{proof}
Since ${^\cO\Ind}_{W'}^W$ and ${^\cO\Res}_{W'}^W$ are exact and biadjoint, they take projectives to projectives. Since $P'\onto L'$ and ${^\cO\Ind}_{W'}^W$ is exact, ${^\cO\Ind}_{W'}^W P'\onto{^\cO\Ind}_{W'}^W L'\onto L$ is a surjection onto $L$. The universal property of projectives then yields ${^\cO\Ind}_{W'}^W P'\onto P$, and since $P$ is projective, this implies $P\mid {^\cO\Ind}_{W'}^W P'$. Now, suppose $\underline{W}\subseteq W'$ and $M\in\cO_c(\underline{W})$ such that $P\mid{^\cO\Ind}_{\underline{W}}^W M$. Then ${^\cO\Ind}_{\underline{W}}^W M\onto L$, so by adjointness $0\neq \Hom({^\cO\Ind}_{\underline{W}}^W M, L)\cong\Hom(M,{^\cO\Res}_{\underline{W}}^W L)$, implying that $W'=\underline{W}$.
\end{proof}
\noindent As in \cite[Proposition 10.6]{BeijingLectures} we then recover the statement that all cuspidal supports $(W',L')$ of a simple module $L\in\cO_c(W)$ are $W$-conjugate. If $\mathrm{rank}(W')=\mathrm{rank}(W)-j$ then we will refer to $j$ as the \textit{cuspidal depth} of $L$. 
Since vertices and cuspidal supports are unique up to conjugacy, we will speak from now on of \textit{the} vertex of a module $M$ and \textit{the} cuspidal support of a simple module $L$. 

\subsection{The KZ functor}
For any complex reflection group $W$ there is a functor $$\KZ:\cO_c(W)\rightarrow H_q(W)-\mathrm{mod}$$ (where $H_q(W)-$mod denotes the category of finite-dimensional $H_q(W)$-modules) which is exact and represented by the object $P_{\KZ}={^\cO\Ind}_{1}^W\C$ \cite{GGOR}. This functor has very strong properties: $\KZ$ is fully faithful on projectives \cite{GGOR}, and $\KZ$ is essentially surjective \cite{LosevKZ}. The Double Centralizer Theorem \cite[Theorem 5.16]{GGOR} shows that blocks of $\cO_c(W)$ are in bijection with blocks of $H_q(W)-$mod \cite[Corollary 5.18]{GGOR}. 

Shan showed that for any parabolic $\underline{W}\subseteq W$ there are functor isomorphisms \cite{Shan}:
$$\KZ\;{^\cO\Ind}_{\underline{W}}^W\cong\Ind_{H_q(\underline{W})}^{H_q(W)}\;\underline{\KZ}\quad\hbox{and}\quad \underline{\KZ}\;{^\cO\Res}_{\underline{W}}^W\cong\Res_{H_q(\underline{W})}^{H_q(W)}\;\KZ$$
where $\underline{\KZ}$ denotes the $\KZ$ functor $\cO_c(\underline{W})\rightarrow H_q(\underline{W})-$mod. Since $\KZ$ respects direct sums, this has an immediate consequence for vertices:
\begin{lemma}\label{deeper}
If $M\mid{^\cO\Ind}_{\underline{W}}^W{^\cO\Res}_{\underline{W}}^W M$ then $\KZ(M)\mid \Ind_{H_q(\underline{W})}^{H_q(W)}\Res_{H_q(\underline{W})}^{H_q(W)}\KZ(M)$ for any $M\in\cO_c(W)$.
\end{lemma}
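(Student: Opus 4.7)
The plan is to apply the functor $\KZ$ to the splitting data witnessing $M \mid {^\cO\Ind}_{\underline{W}}^W \, {^\cO\Res}_{\underline{W}}^W M$, and then transport along Shan's natural isomorphisms. First I would unfold the hypothesis using the notation convention: there exist morphisms $\iota \colon M \to {^\cO\Ind}_{\underline{W}}^W \, {^\cO\Res}_{\underline{W}}^W M$ and $\pi \colon {^\cO\Ind}_{\underline{W}}^W \, {^\cO\Res}_{\underline{W}}^W M \to M$ in $\cO_c(W)$ such that $\pi \iota = \Id_M$. Since $\KZ$ is a functor, applying it yields morphisms $\KZ(\iota)$ and $\KZ(\pi)$ in $H_q(W)-$mod with $\KZ(\pi) \circ \KZ(\iota) = \Id_{\KZ(M)}$.

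Next I would invoke Shan's two natural isomorphisms. Composing $\KZ \circ {^\cO\Ind}_{\underline{W}}^W \cong \Ind_{H_q(\underline{W})}^{H_q(W)} \circ \underline{\KZ}$ with $\underline{\KZ} \circ {^\cO\Res}_{\underline{W}}^W \cong \Res_{H_q(\underline{W})}^{H_q(W)} \circ \KZ$, and evaluating at $M$, produces an isomorphism
\[
\varphi_M \colon \KZ\bigl({^\cO\Ind}_{\underline{W}}^W \, {^\cO\Res}_{\underline{W}}^W M\bigr) \xrightarrow{\;\sim\;} \Ind_{H_q(\underline{W})}^{H_q(W)}\Res_{H_q(\underline{W})}^{H_q(W)} \KZ(M).
\]
Setting $\iota' := \varphi_M \circ \KZ(\iota)$ and $\pi' := \KZ(\pi) \circ \varphi_M^{-1}$ gives a pair of morphisms between $\KZ(M)$ and $\Ind_{H_q(\underline{W})}^{H_q(W)}\Res_{H_q(\underline{W})}^{H_q(W)} \KZ(M)$ with $\pi' \iota' = \KZ(\pi)\KZ(\iota) = \Id_{\KZ(M)}$. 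By definition of the divisibility symbol this exactly says $\KZ(M) \mid \Ind_{H_q(\underline{W})}^{H_q(W)}\Res_{H_q(\underline{W})}^{H_q(W)} \KZ(M)$.

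There is no real obstacle: the argument uses only that $\KZ$ is a functor (hence respects direct summands) and Shan's isomorphisms stated in the excerpt. The only point worth spelling out carefully is that the two natural isomorphisms can be composed at $M$ to produce the single isomorphism $\varphi_M$; this is automatic since both are natural transformations of functors $\cO_c(W) \to H_q(W)-$mod evaluated at the same object.
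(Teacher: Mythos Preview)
Your argument is correct and is exactly the reasoning the paper intends: the paper simply remarks that since $\KZ$ respects direct sums and Shan's isomorphisms identify $\KZ\bigl({^\cO\Ind}_{\underline{W}}^W{^\cO\Res}_{\underline{W}}^W M\bigr)$ with $\Ind_{H_q(\underline{W})}^{H_q(W)}\Res_{H_q(\underline{W})}^{H_q(W)}\KZ(M)$, the lemma is immediate. You have spelled out precisely this implication.
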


\begin{lemma}\label{GJ}\cite[Lemma 3.2]{GriffethJuteau} Let $P\in\cO_c(W)$ be a projective indecomposable module. The vertex of $P$ is equal to the vertex of $\KZ(P)$.
\end{lemma}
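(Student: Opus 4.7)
The plan is to show that for every parabolic subgroup $\underline{W}\subseteq W$, one has $P\mid{^\cO\Ind}_{\underline{W}}^W\,{^\cO\Res}_{\underline{W}}^W P$ in $\cO_c(W)$ if and only if $\KZ(P)\mid\Ind_{H_q(\underline W)}^{H_q(W)}\Res_{H_q(\underline W)}^{H_q(W)}\KZ(P)$ in $H_q(W)$-mod. By Theorem \ref{Higmans} each side of this equivalence is exactly the condition that the vertex of the corresponding module is conjugate into $\underline W$, so the equivalence will force the minimal such parabolic to coincide on both sides. Combined with uniqueness of vertices up to conjugacy (Lemma \ref{Mackey}), this yields equality of the vertex of $P$ and the vertex of $\KZ(P)$. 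The forward implication of the equivalence is immediate from Lemma \ref{deeper}, so the content lies in the converse.

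For the converse, I would first record that because parabolic induction and restriction are biadjoint exact functors, each one is simultaneously a left adjoint of an exact functor and therefore preserves projectives. In particular, both ${^\cO\Res}_{\underline W}^W P$ and $\F\E P:={^\cO\Ind}_{\underline W}^W\,{^\cO\Res}_{\underline W}^W P$ are projective in their respective categories. Two applications of Shan's natural isomorphism give an isomorphism $\Phi_P:\KZ(\F\E P)\stackrel{\sim}{\longrightarrow}\F'\E'\KZ(P)$ of $H_q(W)$-modules, where $\F'$ and $\E'$ denote the Hecke-side induction and restriction. Now suppose $\KZ(P)\mid\F'\E'\KZ(P)$ is witnessed by maps $\alpha:\KZ(P)\to\F'\E'\KZ(P)$ and $\beta:\F'\E'\KZ(P)\to\KZ(P)$ with $\beta\alpha=\Id_{\KZ(P)}$. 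Composing with $\Phi_P^{-1}$ and $\Phi_P$ turns $\alpha,\beta$ into morphisms between the $\KZ$-images of the projective modules $P$ and $\F\E P$, so full faithfulness of $\KZ$ on projectives lifts them uniquely to morphisms $a:P\to\F\E P$ and $b:\F\E P\to P$ in $\cO_c(W)$. Then $\KZ(b\circ a)=\beta\alpha=\Id_{\KZ(P)}=\KZ(\Id_P)$, and faithfulness of $\KZ$ on $\End_{\cO_c(W)}(P)$ forces $b\circ a=\Id_P$, giving $P\mid\F\E P$ as desired.

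The main bookkeeping point is to package Shan's two natural isomorphisms into the single iso $\Phi_P$ above in a way compatible with the lifting, so that the lifted morphisms $a,b$ truly correspond under $\Phi_P$ to the originals $\alpha,\beta$. Since $\KZ$ is a functor and Shan's isomorphisms are natural in the input module, this compatibility is essentially automatic once stated carefully; I do not anticipate a serious obstacle here, and the rest of the argument is a direct assembly of Theorem \ref{Higmans}, Lemma \ref{deeper}, Shan's theorem, and the full faithfulness of $\KZ$ on projectives.
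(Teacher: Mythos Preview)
Your proposal is correct and follows essentially the same route as the paper: establish the equivalence $P\mid{^\cO\Ind}_{\underline{W}}^W{^\cO\Res}_{\underline{W}}^W P\iff\KZ(P)\mid\Ind_{H_q(\underline W)}^{H_q(W)}\Res_{H_q(\underline W)}^{H_q(W)}\KZ(P)$ for every parabolic, using Lemma~\ref{deeper} for the forward direction and, for the converse, Shan's isomorphism together with projectivity of ${^\cO\Ind}_{\underline{W}}^W{^\cO\Res}_{\underline{W}}^W P$ and full faithfulness of $\KZ$ on projectives to lift the splitting maps. If anything, your handling of the $\Hom$-spaces (between $P$ and $\F\E P$, not just $\End(P)$) and of the identification $\Phi_P$ is stated a bit more carefully than in the paper's sketch.
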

\begin{proof} As observed in \cite[Lemma 3.2]{GriffethJuteau}, it is basically immediate that  
 $$P\mid{^\cO\Ind}_{\underline{W}}^W{^\cO\Res}_{\underline{W}}^W P\iff \KZ(P)\mid \Ind_{H_q(\underline{W})}^{H_q(W)}\Res_{H_q(\underline{W})}^{H_q(W)}\KZ(P)$$ but we give full details here. The direction ``$\implies$" is Lemma \ref{deeper}.
For ``$\impliedby$:" suppose that $\KZ(P)\mid \Ind_{H_q(\underline{W})}^{H_q(W)}\Res_{H_q(\underline{W})}^{H_q(W)}\KZ(P)$. Then there are maps
$$\KZ(P)\stackrel{\iota}{\longrightarrow}\Ind_{H_q(\underline{W})}^{H_q(W)}\Res_{H_q(\underline{W})}^{H_q(W)}\KZ(P)\stackrel{\pi}{\longrightarrow}\KZ(P)$$
such that $\pi\iota=\Id_{\KZ(P)}$. We have $\Ind_{H_q(\underline{W})}^{H_q(W)}\Res_{H_q(\underline{W})}^{H_q(W)}\KZ(P)=\KZ\left({^\cO\Ind}_{\underline{W}}^W{^\cO\Res}_{\underline{W}}^W P\right)$ by \cite{Shan}. Moreover, ${^\cO\Ind}_{\underline{W}}^W{^\cO\Res}_{\underline{W}}^W P$ is projective since parabolic restriction and induction take projectives to projectives \cite{Shan}. Since $\End(P)\cong \End(\KZ(P))$ \cite{GGOR}, the maps $\iota$ and $\pi$ lift to maps 
$$P\stackrel{\widetilde{\iota}}{\longrightarrow} {^\cO\Ind}_{\underline{W}}^W{^\cO\Res}_{\underline{W}}^W P
\stackrel{\widetilde{\pi}}{\longrightarrow} P$$
such that $\KZ(\widetilde{\pi})=\pi$ and $\KZ(\widetilde{\iota})=\iota$.
The composition $\tilde{\pi}\tilde{\iota}=\Id_P$ because  $\KZ(\tilde{\pi}\tilde{\iota})=\pi\iota=\Id_{\KZ(P)}$ and $\KZ$ is injective on $\End(P)$. This shows $P\mid{^\cO\Ind}_{\underline{W}}^W{^\cO\Res}_{\underline{W}}^W P$.
\end{proof}

\subsection{Blocks and cuspidal supports for $\cO_c(S_n)$}
We recall some facts about $\cO_c(S_n)$. 
Fix $e\in\mathbb{N}_{\geq 2}$, set $c=\frac{r}{e}>0$ with $\mathrm{gcd}(r,e)=1$, and set $q=\exp(2\pi i c)$.

We use the convention that $(n)$ is the trivial representation of $S_n$. The category $\cO_c(S_n)$ has a unique simple module $L_\lambda$, Verma module $\Delta_\lambda$, and projective indecomposable module $P_\lambda$ for each partition $\lambda$ of $n$. The $\KZ$ functor sends $\Delta_\lambda$ to the Specht module labeled by $\lambda$, and sends $L_\lambda$ to the simple module $D_\lambda$ if $\lambda$ is $e$-restricted and otherwise to $0$ \cite{GGOR}. (Recall that an $e$-restricted partition is a partition $\lambda=(\lambda_1,\lambda_2,\dots)$ satisfying $\lambda_i-\lambda_{i+1}<e$, and such partitions parametrize the simple $H_q(S_n)$-modules). 
The blocks of $H_q(S_n)$, and therefore $\cO_c(S_n)$, are parametrized by $e$-cores: the partitions $\lambda$ labeling simple, standard, and projective indecomposable modules
 in the block $\cB_{\rho,w}$ of $\cO_c(S_n)$ are exactly the partitions of size $n=|\rho|+ew$ with $e$-core $\rho$ and $e$-weight $w$, the latter being defined as the number of $e$-hooks removed successively from the rim of $\lambda$ to obtain $\rho$ (see e.g. \cite{JamesKerber}) \cite[Theorem 4.13]{DipperJames}. If $\sigma=(\sigma_1,\sigma_2,\dots)$ is a partition of $w$ we write $e\sigma$ for the partition $(e\sigma_1,e\sigma_2,\dots)$, and given partitions $\mu=(\mu_1,\mu_2,\dots)$ and $\nu=(\nu_1,\nu_2,\dots)$ we write $\mu+\nu$ for the partition $(\mu_1+\nu_1,\mu_2+\nu_2,\dots)$.
 
The category $\cO_c(S_n)$ has a cuspidal simple module $L$ if and only if $n=e$, in which case $L=L_{(e)}$  \cite{BerestEtingofGinzburg}. The category $\cO_c\left(S_e^{\times k}\right)=\cO_c(S_e)^{\otimes k}$ then has a unique cuspidal simple module $L_{(e)}^{\otimes k}$. All parabolic subgroups of $S_n$ are of the form $S_{m_1}\times S_{m_2}\times\dots\times S_{m_s}$ with $\sum_{j=1}^sm_j=n$. Since we work up to conjugacy, when $m_j=1$ we will omit $S_1=\{1\}$ from the notation. Thus the parabolics $S_e^{\times k}$ are the only parabolic subgroups of $S_n$ whose category $\cO_c$ affords a cuspidal. We will abuse terminology and refer to $S_e^{\times k}$ as the cuspidal support when we mean $(S_e^{\times k},L_{(e)}^{\otimes k})$. Let $\lambda$ be a partition of $n$ and write $\lambda=e\sigma+\nu$ where $\nu$ is $e$-restricted and $\sigma$ is a partition of some $k\geq 0$. Wilcox showed that the cuspidal support of $L_\lambda$ is $S_e^{\times k}$ \cite[Theorem 1.6]{Wilcox}. The simples $L_\lambda\in\cB_{\rho,w}$ of minimal cuspidal depth in the block $\cB_{\rho,w}$ are labeled by partitions of the form $\lambda=e\sigma+\rho$ where $\sigma$ is a partition of $w$. For such a simple, we have:
\begin{equation}\label{minsupp}
{^\cO\Res^{S_n}_{S_e^{\times w}}}L_\lambda=\left(L_{(e)}^{\otimes w}\right)^{\oplus a_{\lambda}}\quad\hbox{and}\quad{^\cO\Res^{S_n}_{\underline{W}}}L_\lambda=0\hbox{ for any }\underline{W}\subsetneq S_e^{\times w}.
\end{equation}
\noindent where $a_\lambda>0$ is some multiplicity. Wilcox identified the subquotient category spanned by the simples in $\cO_c(S_n)$ of a fixed cuspidal depth:
\begin{theorem}\label{Wilcox's equivalence}\cite[Theorem 1.8]{Wilcox} The Serre subquotient category of $\cO_c(S_n)$ consisting of modules with cuspidal support $S_e^{\times w}$ is equivalent to the category of finite-dimensional modules over $\mathbb{C}[S_w]\otimes H_q(S_{n-ew})$ with $q=\exp(2\pi ic)$. If  $\tau_\sigma$ is a simple representation of $\mathbb{C}[S_w]$ and $D_\nu$ is a simple representation of $H_q(S_{n-ew})$ then the simple representation in $\cO_c(S_n)$ corresponding to $\tau_\sigma\otimes D_\nu$ under this equivalence is $L_{e\sigma+\nu}$.
\end{theorem}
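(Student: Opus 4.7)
The plan is to identify $\cC_w$, the Serre subquotient of $\cO_c(S_n)$ whose objects have all composition factors of cuspidal support $(W',L') := (S_e^{\times w}, L_{(e)}^{\otimes w})$, as a module category over an endomorphism algebra, and then compute that algebra explicitly. Following the Bezrukavnikov--Etingof / Losev / Shan--Vasserot philosophy of parabolic induction in category $\cO$, a natural candidate for a projective generator of $\cC_w$ is (the image in $\cC_w$ of) $Q := {}^\cO\Ind^{S_n}_{W' \times S_{|\rho|}}\bigl(L' \boxtimes P\bigr)$, where $P$ is the $\KZ$-projective of $\cO_c(S_{|\rho|})$. The auxiliary tensor factor $P$ ensures that every simple $H_q(S_{|\rho|})$-module is seen on the ``free'' indices, so that every simple of $\cC_w$ occurs as a quotient of $Q$.

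To compute $A := \End(Q)^{\op}$, I would apply the Mackey formula to ${}^\cO\Res\, {}^\cO\Ind$ and use that $L'$ is cuspidal on $W'$: only double cosets $u \in (W'\times S_{|\rho|}) \backslash S_n / (W'\times S_{|\rho|})$ with $uW'u^{-1}=W'$ survive, so the indexing set reduces to the relative Weyl group $N_{S_n}(W')/W' \cong S_w \times S_{|\rho|}$, where $S_w$ permutes the $w$ copies of $L_{(e)}$ and $S_{|\rho|}$ acts on the fixed indices. Transporting via Shan's intertwiner $\KZ\,{}^\cO\Ind \cong \Ind\,\KZ$, the algebra $A$ decomposes as a tensor product of Hecke-type algebras of these two commuting factors, since they act on disjoint strands. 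The $S_{|\rho|}$-factor plainly contributes $H_q(S_{|\rho|})$. The $S_w$-factor, permuting adjacent $e$-strand bundles supporting copies of $L_{(e)}$, contributes a Hecke algebra whose parameter is the braiding-eigenvalue ratio of two such bundles under $\KZ$-monodromy; this works out to $q^{e} = \exp(2\pi i r) = 1$, collapsing the $S_w$-factor to $\C[S_w]$ and giving $A \cong \C[S_w] \otimes H_q(S_{|\rho|})$.

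For the labeling, a simple $L_\lambda$ lies in $\cC_w$ iff $\lambda = e\sigma + \nu$ uniquely with $\sigma\vdash w$ and $\nu$ an $e$-restricted partition of $|\rho|$, by the description of cuspidal supports \cite[Theorem 1.6]{Wilcox} recalled above. Under the equivalence, $\tau_\sigma\otimes D_\nu$ matches $L_{e\sigma+\nu}$: the $\tau_\sigma$-component records the $S_w$-isotypic type appearing in the head of $Q$ whose composition factor is $L_{e\sigma+\nu}$, while $D_\nu = \KZ(L_\nu)$ records the Hecke-simple living on the free strands. The principal obstacle is the Hecke-parameter calculation for the $S_w$-factor: proving that the half-twist of two $L_{(e)}$-bundles produces parameter $q^{e}=1$ is where the cuspidality of $L_{(e)}$ at $c = r/e$ enters in an essential way, and this most likely requires a direct monodromy calculation on ${}^\cO\Ind^{S_{2e}}_{S_e\times S_e}(L_{(e)}\boxtimes L_{(e)})$ or a reduction via Dipper--Mathas-type Morita equivalences on the Hecke side.
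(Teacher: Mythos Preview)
The paper does not prove this statement at all: it is quoted as \cite[Theorem 1.8]{Wilcox} and used as a black box in the proof of Theorem~\ref{VerticesCherednik}. There is therefore no proof in the paper to compare your sketch against.

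On the substance of your sketch: the route you outline---build a projective generator by inducing the cuspidal, compute its endomorphism ring via the Mackey formula, reduce the surviving double cosets to the relative Weyl group $N_{S_n}(W')/W'$, and then identify the resulting Hecke-type parameters---is essentially the general framework of Losev and Shelley-Abrahamson \cite{LosevSA}, not Wilcox's original type-$A$ argument. It is a reasonable strategy, but as you yourself flag, the ``principal obstacle'' (the parameter for the $S_w$-factor being $q^e=1$) is exactly where all the content lies, and your proposal does not carry it out; saying it ``most likely requires a direct monodromy calculation'' is a deferral, not a proof. A second wrinkle: the module $Q={}^\cO\Ind(L'\boxtimes P)$ you propose need not itself be projective in $\cO_c(S_n)$, so you must argue carefully that its image in the Serre subquotient $\cC_w$ is a progenerator there (this is where exactness and cuspidality of $L'$ must be used, and it is not automatic). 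So your proposal is a plausible roadmap in the spirit of \cite{LosevSA}, but with the key technical step left open; for the purposes of this paper the correct move is simply to cite Wilcox, as the author does.
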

 
\subsection{The Dipper-Du Conjecture} 
We now establish the analogous statement to Dipper-Du's conjecture for $\cO_c(S_n)$, then re-establish Dipper-Du's conjecture for $H_q(S_n)$ over $\C$.
\begin{theorem}\label{VerticesCherednik}
Let $\cB=\cB_{\rho,w}$ be a block of $\cO_c(S_n)$ of $e$-weight $w$ and $e$-core $\rho$. The vertices of all modules in $\cB$ are contained in $S_e^{\times w}$, and the simple and projective modules $L_\lambda$, $P_\lambda\in\cB$ such that $\lambda=e\sigma+\rho$, $\sigma$ a partition of $w$, have $S_e^{\times w}$ as their vertex. Moreover, $\{S_e^{\times k}\mid 0\leq k\leq\lfloor\frac{n}{e}\rfloor\}$ comprises the vertices of $\cO_c(S_n)$.
\end{theorem}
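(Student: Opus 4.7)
My plan is to first use Lemma~\ref{PIM} to pin down the vertex of $P_\lambda$ for $\lambda = e\sigma+\rho$ with $\sigma \vdash w$; next to identify the vertex of the simple $L_\lambda$ via~(\ref{minsupp}) and Wilcox's subquotient equivalence; then to propagate the resulting vertex bound to every $M \in \cB$ using Lemmas~\ref{zeta}, \ref{simple}, and~\ref{mofo}; and finally to show every vertex is of the form $S_e^{\times k}$ by exploiting the semisimplicity of $\cO_c(S_a)$ at $c=r/e$ for $a<e$.

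Fixing $\lambda = e\sigma+\rho$ with $\sigma \vdash w$, I would invoke Wilcox's Theorem~1.6 to identify the cuspidal support of $L_\lambda$ as $(S_e^{\times w}, L_{(e)}^{\otimes w})$; Lemma~\ref{PIM} then yields $S_e^{\times w}$ as the vertex of $P_\lambda$. For $L_\lambda$ itself I would combine~(\ref{minsupp}) with the adjunction $\Hom({}^\cO\Ind_{\underline W}^{S_n} N, L_\lambda) \cong \Hom(N, {}^\cO\Res_{\underline W}^{S_n} L_\lambda)$ and Lemma~\ref{Mackey} to rule out any $\underline W \subsetneq S_e^{\times w}$. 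For the complementary bound $L_\lambda \mid {}^\cO\Ind_{S_e^{\times w}}^{S_n} L_{(e)}^{\otimes w}$, I plan to use Wilcox's Theorem~1.8: under $\cB / \cB_{<w} \cong \C[S_w]\text{-mod}$, $L_\lambda$ maps to $\tau_\sigma$ and ${}^\cO\Ind L_{(e)}^{\otimes w}$ maps to the regular $\C[S_w]$-module, in which $\tau_\sigma$ splits off as a summand. Since $L_\lambda$ has strictly maximal cuspidal support among objects of $\cB$, no non-zero morphism from or to $L_\lambda$ in $\cB$ can factor through an object of $\cB_{<w}$, so the natural maps $\Hom_\cB(L_\lambda,-) \to \Hom_{\cB/\cB_{<w}}(\tau_\sigma,-)$ and $\Hom_\cB(-,L_\lambda) \to \Hom_{\cB/\cB_{<w}}(-,\tau_\sigma)$ are isomorphisms, letting me lift the subquotient splitting to a splitting $\pi\iota = \Id_{L_\lambda}$ in $\cB$.

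To propagate from this one simple to every $M \in \cB$, I would choose $\sigma = (w)$ so that $a_\lambda = 1$; then $\dim\End({}^\cO\Res L_\lambda) = 1$ and Lemma~\ref{zeta} converts the divisibility $L_\lambda \mid {}^\cO\Ind{}^\cO\Res L_\lambda$ into $\zeta_{L_\lambda} \neq 0$. Lemma~\ref{simple} then propagates $\zeta_{L_\mu} \neq 0$ to every simple $L_\mu \in \cB$, and Lemma~\ref{mofo} gives $M \mid {}^\cO\Ind^{S_n}_{S_e^{\times w}}{}^\cO\Res^{S_n}_{S_e^{\times w}} M$ for every $M \in \cB$, so every vertex in $\cB$ is contained in $S_e^{\times w}$.

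To see every vertex equals some $S_e^{\times k}$, I would let $V = \prod_{i,j} S_{\mu^i_j}$ be any vertex in $\cB$, written as a Young subgroup of $S_e^{\times w}$, and fix $N \in \cO_c(V)$ indecomposable with $M \mid {}^\cO\Ind_V^{S_n} N$. Minimality of $V$ forces the vertex of $N$ in $V$ to equal $V$, so writing $N = \bigotimes N_{i,j}$ each $N_{i,j}$ must have vertex $S_{\mu^i_j}$ in $\cO_c(S_{\mu^i_j})$. For $2 \le \mu^i_j < e$ the parameter $c=r/e$ is regular for $S_{\mu^i_j}$, so $\cO_c(S_{\mu^i_j})$ is semisimple; combined with the absence of cuspidal simples in $\cO_c(S_a)$ for $a \neq e$, every indecomposable there has vertex $\{1\} \subsetneq S_{\mu^i_j}$---contradiction. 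Hence each $\mu^i_j \in \{1, e\}$ and $V = S_e^{\times k}$; every $k \in \{0,\dots,\lfloor n/e \rfloor\}$ occurs by taking $P_\lambda$ for some $\lambda \vdash n$ with $|\sigma|=k$ in the Wilcox decomposition. The hardest step will be the Hom-lifting in paragraph two; it rests on the vanishing $\Hom_\cB(L_\lambda, X) = 0$ for $X \in \cB_{<w}$ (and dually), which is forced by the strict increase in cuspidal support.
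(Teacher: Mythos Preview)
Your overall architecture matches the paper's, but there is a real gap in the propagation step. You assert that choosing $\sigma=(w)$ forces $a_\lambda=1$ in~(\ref{minsupp}), i.e.\ that ${^\cO\Res}^{S_n}_{S_e^{\times w}}L_{e(w)+\rho}$ is a single copy of $L_{(e)}^{\otimes w}$. That is true when $\rho=\emptyset$ (by Shan--Vasserot one has ${^\cO\Res}^{S_{ew}}_{S_e^{\times w}}L_{e\sigma}\cong\bigl(L_{(e)}^{\otimes w}\bigr)^{\oplus\dim\sigma}$), but for a general $e$-core $\rho$ the restriction picks up an extra factor of $\dim\rho$, so $a_{e(w)+\rho}=\dim\rho$, which is $>1$ as soon as $\rho$ is not a linear character of $S_{|\rho|}$. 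In that case $\dim\End\bigl({^\cO\Res}^{S_n}_{S_e^{\times w}}L_\lambda\bigr)=(\dim\rho)^2>1$ and the hypothesis of Lemma~\ref{zeta} fails; knowing only $L_\lambda\mid{^\cO\Ind}\,{^\cO\Res}\,L_\lambda$ no longer lets you conclude $\zeta_{L_\lambda}\neq0$, so Lemma~\ref{mofo} does not fire and the bound does not propagate to arbitrary $M\in\cB$.

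The paper repairs exactly this point by inserting the intermediate parabolic $S_{|\rho|}\times S_{ew}$: one first runs your argument in the principal block $\cB_{\emptyset,w}\subset\cO_c(S_{ew})$, where $\rho=\emptyset$ and $a_{(ew)}=1$ genuinely holds, and then passes to $\cB_{\rho,w}$ via block-cut biadjoint functors $\E,\F$ built from ${^\cO\Res}^{S_n}_{S_{|\rho|}\times S_{ew}}$ and ${^\cO\Ind}^{S_n}_{S_{|\rho|}\times S_{ew}}$. A Littlewood--Richardson computation shows $\E(L_{e(w)+\rho})=L_\rho\otimes L_{(ew)}$, which \emph{is} multiplicity-free, so Lemma~\ref{zeta} applies to this $(\E,\F)$ pair and Lemma~\ref{mofo} then gives $M\mid\F\E(M)$, hence $M\mid{^\cO\Ind}^{S_n}_{S_e^{\times w}}{^\cO\Res}^{S_n}_{S_e^{\times w}}M$, for all $M\in\cB_{\rho,w}$. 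Your paragraphs on the vertex of $L_\lambda$ itself (via the Serre subquotient) and on ruling out non-$e$-parabolic vertices are fine and essentially coincide with the paper's treatment; the missing ingredient is precisely this two-step factorisation through $S_{|\rho|}\times S_{ew}$ to kill the $\dim\rho$ multiplicity.
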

\begin{proof}
The simple modules $L_\lambda$ in $\cB$ of minimal cuspidal depth are those such that $\lambda=e\sigma+\rho$ for $\sigma$ a partition of $w$. 
Since $\rho$ is an $e$-core, $D_\rho\in\mathcal{H}_q(S_{|\rho|})-$mod is projective and in a block of $\mathcal{H}_q(S_{|\rho|})-$mod by itself. The block of $\mathbb{C}[S_w]\otimes \mathcal{H}_q(S_{|\rho|})-$mod corresponding under the equivalence of Theorem \ref{Wilcox's equivalence} to the Serre subcategory spanned by the simple modules in $\cB$ of minimal cuspidal depth is therefore equivalent to $\mathbb{C}[S_w]-$mod. If $\underline{W}\subseteq S_n$ is a parabolic subgroup and $L\in\cO_c(\underline{W})$ is a simple module, then the cuspidal depth of a simple constituent of ${^\cO\Ind}^{S_n}_{\underline{W}} L$ can never be larger than the cuspidal depth of the head of ${^\cO\Ind}^{S_n}_{\underline{W}} L$. It follows that if $\lambda=e\sigma+\rho$ with $\rho$ an $e$-core and $\sigma$ a partition of $w$, then: $$L_\lambda\mid{^\cO\Ind^{S_n}_{S_e^{\times w}}}L_{(e)}^{\otimes w}.$$
Combined with equation (\ref{minsupp}) above, this shows that $S_e^{\times w}$ is the vertex of $L_\lambda$ for every $L_\lambda$ in $\cB_{\rho,w}$ of minimal cuspidal depth.



To finish the proof of the theorem it is enough to show that there is a simple module $L_\lambda\in\cB$ such that $\zeta_{L_\lambda}\neq 0$ and then apply Lemma \ref{mofo}. To this end, we now consider $L_{e(w)+\rho}$. First, let us explain what happens when $\rho=\emptyset$, so that $\lambda=e(w)=(ew)$ is the trivial representation of $S_{ew}$. By \cite{ShanVasserot}, 
$${^\cO\Res^{S_{ew}}_{S_e^{\times w}}}L_{(ew)}=L_{(e)}^{\otimes w}.$$
Applying Lemma \ref{zeta} gives that $\zeta_{L_{(ew)}}$ is an isomorphism. 
Lemma \ref{mofo} then implies that $M\mid{^\cO\Ind^{S_{ew}}_{S_e^{\times w}}}{^\cO\Res^{S_{ew}}_{S_e^{\times w}}} M$ for all $M\in\cB_{\emptyset,w}$. Thus if $\rho=\emptyset$, we are done.

From now on, assume $\rho\neq \emptyset$. We will copy the strategy of \cite{Whitley} by considering a relevant block of the category $\cO_c(S_{|\rho|}\times S_{ew})$ as an intermediate step.
Consider the block $\cB_{\rho,0}\otimes\cB_{\emptyset,w} \subset\cO_c(S_{|\rho|})\otimes \cO_c(S_{ew})=\cO_c(S_{|\rho|}\times S_{ew})$. By Lemma \ref{PIM} the vertex of $L_\rho=\Delta_\rho=P_\rho$ is $\{1\}$. Thus the vertex of $L_\rho\otimes M$ for any $M\in\cB_{\emptyset,w}$ is just the vertex of $M$ (since we ignore copies of $\{1\}$ in a parabolic).

Next, we pre- and post-compose the induction and restriction functors with the functors of inclusion and projection from and to the desired blocks. Define functors $\E$ and $\F$ by:
\begin{align*}
\E&=\mathrm{Pr}_{\cB_{\rho,0}\otimes\cB_{\emptyset,w}}\;{^\cO\Res}_{S_{|\rho|}\times S_{ew}}^{S_n}\;\mathrm{Incl}_{\cB_{\rho,w}}\\
\F&=\mathrm{Pr}_{\cB_{\rho,w}}\;\;{^\cO\Ind}_{S_{|\rho|}\times S_{ew}}^{S_n}\;\mathrm{Incl}_{\cB_{\rho,0}\otimes\cB_{\emptyset,w}}
\end{align*}
Here, $\mathrm{Pr}_{\cB_{\rho,w}}$ is projection from $\cO_c(S_n)$ onto the block $\cB_{\rho,w}$ and $\mathrm{Incl}_{\cB_{\rho,w}}$ is inclusion of the block $\cB_{\rho,w}$ into $\cO_c(S_n)$, a biadjoint pair of functors; and similarly with the functors $\mathrm{Pr}_{\cB_{\rho,0}\otimes\cB_{\emptyset,w}}$ and $\mathrm{Incl}_{\cB_{\rho,0}\otimes\cB_{\emptyset,w}}$ for the block $\cB_{\rho,0}\otimes\cB_{\emptyset,w}$ of $\cO_c(S_{|\rho|}\times S_{ew})$. By \cite[Theorem IV.8.1]{MacLane}, $\E$ and $\F$ are biadjoint. Moreover $\E$ and $\F$ are exact as each functor in the compositions defining them is exact. Let $\zeta=\varepsilon\eta$ be the natural transformation of the identity functor on $\cB_{\rho,w}$ arising from the biadjunction between $\E$ and $\F$.

We claim that $\E(L_{e(w)+\rho})=L_\rho\otimes L_{(ew)}$. We know that the module $\F \left(L_\rho\otimes L_{(ew)}\right)$ is semisimple by semisimplicity of the subcategory of $\cB_{\rho,w}$ generated by the simples of minimal cuspidal depth in the block. 
In the Grothendieck group we can write $[L_\lambda]=[\Delta_\lambda]+\sum\limits_{\mu\vartriangleleft\lambda}c_\mu[\Delta_\mu]$ for some $c_\mu\in\Z$ \cite{GGOR}. 
 The induction rule for $[^\cO\Ind_{\underline{W}}^{S_n}\Delta_\chi]$ is just the group induction rule for $\Ind_{\underline{W}}^{S_n}\chi$ \cite{BezrukavnikovEtingof}.
For any partitions $\lambda,\mu,\nu$, the Littlewood-Richardson coefficient $c_{\lambda,\mu}^{\nu}\neq 0$ implies $\nu_1\leq \lambda_1+\mu_1$. It follows that $[\Delta_{e(w)+\rho}]$ does not occur in $[\F(\Delta_{\rho}\otimes \Delta_{\tau})]$ for any $\tau\neq (ew)$.  Since $e(w)+\rho$ is the maximal partition in the block in dominance order, then $[L_{e(w)+\rho}]$ does not occur in $[\F \left(L_\rho\otimes L_{\tau}\right)]$ for any other $\tau\neq (ew)$. Also, we have $c_{\rho,(ew)}^{\rho+e(w)}=1$, thus $L_{e(w)+\rho}\mid\F(L_\rho\otimes L_{(ew)})$ with multiplicity $1$.
 So we have \begin{align*}1&=\dim\Hom(L_{e(w)+\rho},\F(L_\rho\otimes L_{(ew)}))=\dim\Hom(\E(L_{e(w)+\rho}), L_\rho\otimes L_{(ew)}),\\0&=\dim\Hom(L_{e(w)+\rho},\F(L_\rho\otimes L_{\tau}))=\dim\Hom(\E(L_{e(w)+\rho}),L_\rho\otimes L_{\tau})\end{align*} for $\tau\neq(ew)$ and so $\E(L_{e(w)+\rho})$ is indecomposable with simple head $L_\rho\otimes L_{(ew)}$. But its composition factors must have the same cuspidal support as $L_{e(w)+\rho}$ (they cannot have bigger depth and there is no smaller), therefore by previous remarks $\E(L_{e(w)+\rho})$ is semisimple. 
Therefore $\E(L_{e(w)+\rho})=L_\rho\otimes L_{(ew)}$ and we may apply Lemma \ref{zeta} obtaining that $\zeta_{L_{e(w)+\rho}}$ is an isomorphism; Lemma \ref{mofo} then implies that $M\mid\F\E\left( M\right)$ for all $M\in\cB_{\rho,w}$.

Therefore $M\mid{^\cO\Ind^{S_n}_{S_e^{\times w}}}{^\cO\Res^{S_n}_{S_e^{\times w}}}M$ for all $M\in\cB_{\rho,w}$, and by (\ref{minsupp}) above, $S_e^{\times w}$ is the minimal parabolic for which such a statement holds. If $a<e$ then every $L_\lambda$ in $\cO_c(S_a)$ is projective and in a block by itself; for any $M\in\cO_c(S_e)^{\otimes w}$ then, the vertex of $M$ is $S_e^{\times k}$ for some $k\leq w$. Thus the set of vertices of $\cO_c(S_n)$ is contained in the set of parabolics $\{S_e^{\times k}\mid 0\leq k\leq \lfloor \frac{n}{e}\rfloor\}$. 
By \cite[Theorem 1.6]{Wilcox}, for every $0\leq k\leq \lfloor \frac{n}{e}\rfloor$ there exists a partition $\lambda$ of $n$ such that $L_\lambda$ has cuspidal support $S_e^{\times k}$. Then $S_e^{\times k}$ is the vertex of its projective cover $P_\lambda$ by Lemma \ref{PIM}. This shows the set of vertices of $\cO_c(S_n)$ contains the set of parabolics $\{S_e^{\times k}\mid 0\leq k\leq \lfloor \frac{n}{e}\rfloor\}$. We are done.
\end{proof}

\begin{theorem}[\textit{Dipper-Du conjecture over $\C$}]\label{VerticesHecke} Let $\mathsf{B}$ be a weight $w$ block of $H_q(S_n)-\mathrm{mod}$.
The vertices of all modules in ${\mathsf{B}}$ are contained in $S_e^{\times w}$, and the modules $\KZ(P_\lambda)\in{\mathsf{B}}$ such that $\lambda=e\sigma+\rho$, $\sigma$ a partition of $w$, have $S_e^{\times w}$ as their vertex. Moreover, $\{S_e^{\times k}\mid 0\leq k\leq\lfloor\frac{n}{e}\rfloor\}$ comprises the vertices of $H_q(S_n)$.
\end{theorem}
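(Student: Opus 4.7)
The plan is to transport Theorem \ref{VerticesCherednik} from $\cO_c(S_n)$ to $H_q(S_n)-\mathrm{mod}$ via the $\KZ$ functor. Beyond Theorem \ref{VerticesCherednik} itself, the additional ingredients are essential surjectivity of $\KZ$ \cite{LosevKZ}, the bijection between blocks of $\cO_c(S_n)$ and blocks of $H_q(S_n)-\mathrm{mod}$ \cite{GGOR} (so that $\cB_{\rho,w}$ corresponds to the weight-$w$ block $\mathsf{B}$), Lemma \ref{deeper}, and Lemma \ref{GJ}. No new computation is needed --- the strategy is simply to ferry the $\cO$-side statements across to the Hecke side.

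First I would establish the vertex-containment statement. Given an indecomposable $M \in \mathsf{B}$, essential surjectivity produces some $N \in \cO_c(S_n)$ with $\KZ(N) \cong M$; after projecting $N$ to the block $\cB_{\rho,w}$ (which $\KZ$ preserves) and passing to a Krull--Schmidt summand whose $\KZ$-image is $M$, I may take $N$ to be indecomposable in $\cB_{\rho,w}$. Theorem \ref{VerticesCherednik} then gives $N \mid {^\cO\Ind}_{S_e^{\times w}}^{S_n}\, {^\cO\Res}_{S_e^{\times w}}^{S_n}\, N$, and Lemma \ref{deeper} yields $M \mid \Ind_{H_q(S_e^{\times w})}^{H_q(S_n)} \Res_{H_q(S_e^{\times w})}^{H_q(S_n)} M$, so the vertex of $M$ is contained in $S_e^{\times w}$.

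Next, for the statement about $\KZ(P_\lambda)$ with $\lambda = e\sigma + \rho$ and $|\sigma| = w$: Theorem \ref{VerticesCherednik} identifies the vertex of $P_\lambda$ as $S_e^{\times w}$, and Lemma \ref{GJ} translates this directly to the vertex of $\KZ(P_\lambda)$; note $\KZ(P_\lambda)$ is a nonzero indecomposable projective since $\KZ$ is fully faithful on projectives. For the final description of the set of vertices of $H_q(S_n)$, one inclusion is immediate from block decomposition combined with the containment statement just proved. The reverse inclusion comes from Wilcox \cite[Theorem 1.6]{Wilcox}: for each $0 \leq k \leq \lfloor n/e \rfloor$ pick a simple $L_\lambda \in \cO_c(S_n)$ with cuspidal support $S_e^{\times k}$, so that $P_\lambda$ has vertex $S_e^{\times k}$ by Lemma \ref{PIM} and $\KZ(P_\lambda)$ has the same vertex by Lemma \ref{GJ}.

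The main point requiring care is the very first step --- lifting an indecomposable $M \in \mathsf{B}$ to an indecomposable preimage $N \in \cB_{\rho,w}$ --- since vertices are intrinsically a notion for indecomposable modules and Theorem \ref{VerticesCherednik} must be fed an object in the matching Cherednik block. Once this is seen to reduce cleanly to essential surjectivity of $\KZ$, the block correspondence, and Krull--Schmidt, the rest is mechanical application of Lemmas \ref{deeper}, \ref{GJ}, and \ref{PIM}, so I do not anticipate any substantive obstacle beyond bookkeeping.
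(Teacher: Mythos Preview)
Your proposal is correct and follows essentially the same route as the paper's proof: lift via essential surjectivity of $\KZ$, use Theorem \ref{VerticesCherednik} together with Lemma \ref{deeper} for the containment, and use Lemma \ref{PIM} plus Lemma \ref{GJ} (via Wilcox) to realize every $S_e^{\times k}$ as a vertex. Your extra care about producing an indecomposable preimage in $\cB_{\rho,w}$ is harmless but unnecessary, since the proof of Theorem \ref{VerticesCherednik} actually establishes $M\mid{^\cO\Ind}^{S_n}_{S_e^{\times w}}{^\cO\Res}^{S_n}_{S_e^{\times w}}M$ for every $M$ in the block, not just indecomposables.
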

\begin{proof}
Let $\cB$ be the block of $\cO_c(S_n)$ such that $\KZ(\cB)=\mathsf{B}$. 
Let $P_\lambda$ be the projective cover of $L_\lambda\in{\cB}$ where $L_\lambda\mid\Ind^{S_n}_{S_e^{\times w}}L_{(e)}^{\otimes w}$. Then $S_e^{\times w}$ is the vertex of $P_\lambda$ since $(S_e^{\times w}, L_e^{\otimes w})$ is the cuspidal support of $L_\lambda$ \cite{Wilcox}. Then by Lemma \ref{GJ} $S_e^{\times w}$ is the vertex of $\KZ(P_\lambda)$.
Moreover, for any $N\in\mathsf{B}$ there exists $M\in\cB$ such that $N\cong \KZ(M)$ by essential surjectivity of $\KZ$ \cite{LosevKZ}. By Lemma \ref{deeper} the vertex of $\KZ(M)$ is contained in the vertex of $M$. It then follows from Theorem \ref{VerticesCherednik} that the vertex of $\KZ(M)$ is a subgroup of $S_e^{\times k}$, where $S_e^{\times k}$ is the vertex of $M$. Since $H_q(S_m)-$mod is semisimple for $1\leq m<e$, this shows that the set of vertices of $H_q(S_n)-$mod is contained in the set $\{S_e^{\times k}\mid 0\leq k\leq\lfloor\frac{n}{e}\rfloor\}$. But (as just used in the proof of Theorem \ref{VerticesCherednik}) the set of vertices of projective indecomposable modules $P_\mu$ of $\cO_c(S_n)$ is equal to $\{S_e^{\times k}\mid 0\leq k\leq\lfloor\frac{n}{e}\rfloor\}$, and by Lemma \ref{GJ} the vertex of $P_\mu$ is the same as the vertex of $\KZ(P_\mu)$. Therefore the set of vertices of $H_q(S_n)-$mod is equal to $\{S_e^{\times k}\mid 0\leq k\leq\lfloor\frac{n}{e}\rfloor\}$.
\end{proof}

\subsection{The vertices of simple modules in $\cO_c(S_n)$}

The category $\cO_c(W)$ has enough projectives and has finite global dimension \cite{GGOR}, so any module $M$ in $\cO_c(W)$ has a finite projective resolution $P_\bullet$ which is unique up to direct summands of trivial complexes $0\rightarrow Q\stackrel{\sim}{\rightarrow}Q\rightarrow 0$. If $P_\bullet$ does not contain any such trivial summands then $P_\bullet$ is said to be a minimal projective resolution. By replacing $M$ by its minimal projective resolution, we can get a lower bound on the vertex of $M$. 

\begin{lemma}\label{lb proj res} Let $P_\bullet=\dots \rightarrow P_n\rightarrow P_{n-1}\rightarrow\dots \rightarrow P_0\rightarrow 0$ be a minimal projective resolution of a module $M\in\cO_c(W)$. Then $M\mid {^\cO\Ind}_{\underline{W}}^W\;{^\cO\Res}_{\underline{W}}^W M$ if and only if \\$P_\bullet\mid{^\cO\Ind}_{\underline{W}}^W\;{^\cO\Res}_{\underline{W}}^W P_\bullet$ as complexes. In particular, if $M\mid {^\cO\Ind}_{\underline{W}}^W\;{^\cO\Res}_{\underline{W}}^W M$ then $Q\mid{^\cO\Ind}_{\underline{W}}^W\;{^\cO\Res}_{\underline{W}}^W Q$ for every projective indecomposable module $Q$ in $P_\bullet$.
\end{lemma}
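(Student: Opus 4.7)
The plan is to apply two standard facts about minimal projective resolutions in $\cO_c(W)$, which is Krull--Schmidt since it is the category of finite-dimensional modules over a finite-dimensional algebra: (i) the composite functor ${^\cO\Ind}_{\underline{W}}^W \, {^\cO\Res}_{\underline{W}}^W$ is exact and sends projectives to projectives, so ${^\cO\Ind}_{\underline{W}}^W \, {^\cO\Res}_{\underline{W}}^W P_\bullet$ is itself a projective resolution of ${^\cO\Ind}_{\underline{W}}^W \, {^\cO\Res}_{\underline{W}}^W M$; and (ii) in this setting any bounded projective resolution of a module is isomorphic as a complex to the (unique up to isomorphism) minimal one plus a direct sum of trivial complexes of the form $0 \to Q \stackrel{\sim}{\to} Q \to 0$.

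The direction $(\Leftarrow)$ is immediate: applying $H_0$ to the chain maps exhibiting the splitting and using exactness of ${^\cO\Ind}_{\underline{W}}^W \, {^\cO\Res}_{\underline{W}}^W$ produces $M \mid {^\cO\Ind}_{\underline{W}}^W \, {^\cO\Res}_{\underline{W}}^W M$. For $(\Rightarrow)$, suppose ${^\cO\Ind}_{\underline{W}}^W \, {^\cO\Res}_{\underline{W}}^W M = M \oplus M'$ and let $Q_\bullet$ be a minimal projective resolution of $M'$. Since a direct sum of minimal projective resolutions remains minimal (differentials stay in the radical componentwise), $P_\bullet \oplus Q_\bullet$ is the minimal projective resolution of $M \oplus M'$; fact (ii) applied to the projective resolution ${^\cO\Ind}_{\underline{W}}^W \, {^\cO\Res}_{\underline{W}}^W P_\bullet$ of the same module then yields an isomorphism of complexes ${^\cO\Ind}_{\underline{W}}^W \, {^\cO\Res}_{\underline{W}}^W P_\bullet \cong P_\bullet \oplus Q_\bullet \oplus T_\bullet$, where $T_\bullet$ is a direct sum of trivial complexes, exhibiting $P_\bullet$ as a direct summand.

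For the ``in particular'' statement, restrict the splitting to degree $n$ to get $P_n \mid {^\cO\Ind}_{\underline{W}}^W \, {^\cO\Res}_{\underline{W}}^W P_n$ as modules. Decomposing $P_n = \bigoplus_i Q^{(i)}$ into indecomposable projectives, so that ${^\cO\Ind}_{\underline{W}}^W \, {^\cO\Res}_{\underline{W}}^W P_n = \bigoplus_i {^\cO\Ind}_{\underline{W}}^W \, {^\cO\Res}_{\underline{W}}^W Q^{(i)}$, Krull--Schmidt forces any fixed indecomposable summand $Q$ of $P_n$ to appear as an indecomposable summand of ${^\cO\Ind}_{\underline{W}}^W({^\cO\Res}_{\underline{W}}^W Q^{(j)})$ for some $j$. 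Setting $N := {^\cO\Res}_{\underline{W}}^W Q^{(j)} \in \cO_c(\underline{W})$, this verifies condition (2) of Theorem \ref{Higmans} for the object $Q$ with the functor pair $(\E,\F)=({^\cO\Res}_{\underline{W}}^W,{^\cO\Ind}_{\underline{W}}^W)$, and the equivalence (2)$\Leftrightarrow$(1) of that theorem delivers $Q \mid {^\cO\Ind}_{\underline{W}}^W \, {^\cO\Res}_{\underline{W}}^W Q$.

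The main obstacle is verifying fact (ii). The standard argument is: comparison chain maps between two projective resolutions of $M$ lifting $\Id_M$ exist in both directions and are unique up to chain homotopy, so the self-composition on $P_\bullet$ lifts $\Id_M$ and differs from the identity by $dh + hd$, which at each degree has image contained in $\mathrm{rad}(P_n)$ because minimality forces the differentials of $P_\bullet$ to land in the radical. Nakayama's lemma applied to $\End(P_n)$ upgrades this self-composition to a termwise automorphism, producing a splitting $R_\bullet \cong P_\bullet \oplus (\text{acyclic bounded complex of projectives})$; the remaining acyclic complex of projectives is contractible in a Krull--Schmidt setting and therefore decomposes as a direct sum of trivial complexes.
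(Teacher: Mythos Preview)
Your argument is correct, but it takes a different route from the paper. The paper's proof is a single sentence: it invokes Theorem~\ref{Higmans} (Brou\'{e}'s Higman criterion) directly in the triangulated categories $D^b(\cO_c(W))$ and $D^b(\cO_c(\underline{W}))$. Since ${^\cO\Ind}$ and ${^\cO\Res}$ are exact and biadjoint, they induce biadjoint triangulated functors on bounded derived categories, and there $M$ and $P_\bullet$ are isomorphic objects; the passage from ``summand in $D^b$'' to ``summand as complexes'' for bounded complexes of projectives is then the standard fact you spelled out in your final paragraph. Your approach instead stays in the abelian category and unpacks this passage explicitly via minimal resolutions and Krull--Schmidt, which is more elementary and self-contained but longer. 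The paper's route is slicker if one is comfortable with Brou\'{e}'s theorem in the triangulated setting; yours makes the mechanism transparent and in particular handles the ``as complexes'' clause without any appeal to derived categories.

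Two small remarks. First, your opening sentence says $\cO_c(W)$ ``is the category of finite-dimensional modules over a finite-dimensional algebra''; strictly, it is only \emph{equivalent} to such a category (the Cherednik-algebra modules themselves are generally infinite-dimensional over $\C$), though this is harmless for your purposes since Krull--Schmidt and the radical arguments transfer along the equivalence. Second, in your ``in particular'' step you can shorten slightly: once $P_n \mid {^\cO\Ind}_{\underline{W}}^W\,{^\cO\Res}_{\underline{W}}^W P_n$, any indecomposable summand $Q$ already satisfies $Q \mid {^\cO\Ind}_{\underline{W}}^W\bigl({^\cO\Res}_{\underline{W}}^W P_n\bigr)$, so condition~(2) of Theorem~\ref{Higmans} holds with $N={^\cO\Res}_{\underline{W}}^W P_n$ and no further decomposition is needed.
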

\begin{proof} This follows from Theorem \ref{Higmans} applied to $D_b(\cO_c(W))$ and $D_b(\cO_c(\underline{W}))$.
\end{proof}
Now let $W=S_n$, $2\leq e\leq n$  and $c=\frac{r}{e}>0$, $\mathrm{gcd}(r,e)=1$.

\begin{lemma}\label{wt 1}
Let $L_\lambda$ be any simple module in the principal block $\cB_{\emptyset,1}$ of $\cO_c(S_e)$. Then the vertex of $L_\lambda$ is $S_e$.
\end{lemma}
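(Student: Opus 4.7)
The proof splits naturally into two cases according to whether $\lambda = (e)$. The containment of the vertex of $L_\lambda$ in $S_e$ is immediate from Theorem \ref{VerticesCherednik}, since $\cB_{\emptyset,1}$ has $e$-weight $w=1$; the content is to establish the reverse inclusion.

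For $\lambda = (e)$, the module $L_{(e)}$ is cuspidal by Berest-Etingof-Ginzburg, so ${^\cO\Res}_{\underline{W}}^{S_e} L_{(e)} = 0$ for every proper parabolic $\underline{W} \subsetneq S_e$. By adjointness $\Hom(L_{(e)}, {^\cO\Ind}_{\underline{W}}^{S_e} N) = \Hom({^\cO\Res}_{\underline{W}}^{S_e} L_{(e)}, N) = 0$ for every $N \in \cO_c(\underline{W})$, so $L_{(e)}$ cannot be a direct summand of any module induced from $\underline{W}$, forcing its vertex to be $S_e$.

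For $\lambda = (e-k, 1^k)$ with $1 \leq k \leq e-1$, the plan is to exhibit $P_{(e)}$ as a direct summand of some term of the minimal projective resolution of $L_\lambda$; since $P_{(e)}$ has vertex $S_e$ by Theorem \ref{VerticesCherednik}, Lemma \ref{lb proj res} will then force the vertex of $L_\lambda$ to contain $S_e$. To produce $P_{(e)}$ in the resolution I would use the explicit Loewy structure of $\cB_{\emptyset,1}$: the Vermas are of length two, with $\Delta_{(e-k, 1^k)}$ having top $L_{(e-k, 1^k)}$ and socle $L_{(e-k-1, 1^{k+1})}$ for $0 \leq k \leq e-2$, and $\Delta_{(1^e)} = L_{(1^e)}$. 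BGG reciprocity yields $P_{(e)} = \Delta_{(e)}$ and, for $k \geq 1$, a two-step Verma filtration of $P_{(e-k, 1^k)}$ with $\Delta_{(e-k+1, 1^{k-1})}$ on the bottom; in particular $L_{(e)}$ is a composition factor of $P_{(e-1, 1)}$. Computing syzygies iteratively, the ``upward'' branch of the resolution of $L_{(e-k, 1^k)}$, which moves along the linear chain of hooks toward $(e)$, should reach $L_{(e)}$ after at most $k$ steps, placing $P_{(e)}$ in the corresponding projective.

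The main obstacle is to make this syzygy analysis precise for interior $1 < k < e-1$: in that range, $\Omega L_{(e-k, 1^k)}$ has both $L_{(e-k-1, 1^{k+1})}$ and $L_{(e-k+1, 1^{k-1})}$ in its head and the resolution branches in both directions of the chain, so one must verify that the upward branch is not killed by cancellations at subsequent stages. Equivalently, the goal becomes $\Ext^k(L_{(e-k, 1^k)}, L_{(e)}) \neq 0$, which I would establish by induction on $k$ using the chain Ext quiver of the block (or, more conceptually, the Koszul-type combinatorics governing $\cB_{\emptyset,1}$).
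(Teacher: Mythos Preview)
Your approach is essentially the paper's: both reduce to Lemma \ref{lb proj res} by exhibiting $P_{(e)}$ in the minimal projective resolution of $L_\lambda$, using that $P_{(e)}$ has vertex $S_e$ because $L_{(e)}$ is cuspidal (the paper cites Lemma \ref{PIM} for this rather than Theorem \ref{VerticesCherednik}). The ``main obstacle'' you flag---the branching/syzygy analysis---is precisely what the paper disposes of in one line: citing the known structure of $\cB_{\emptyset,1}$ from \cite{BerestEtingofGinzburg}, \cite{Rouquier}, it asserts that the \emph{final} nonzero term of the minimal projective resolution of every simple in the block is $P_{(e)}$, leaving the routine verification to the reader. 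Your reformulation as $\Ext^k(L_{(e-k,1^k)},L_{(e)})\neq 0$ is a clean way to carry that out from the Loewy data you already wrote down; your separate treatment of $\lambda=(e)$ via cuspidality is correct but unnecessary, since the paper's uniform statement covers it as well (the resolution of $L_{(e)}$ also terminates in $P_{(e)}$).
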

\begin{proof}
The structure of the block $\cB_{\emptyset,1}$ is completely known, see \cite{BerestEtingofGinzburg},\cite{Rouquier}. It is easy to calculate the minimal projective resolution of any simple $L_\lambda\in\cB_{\emptyset,1}$; the final nonzero term of this resolution is $P_{(e)}$. The simple $L_{(e)}$ is cuspidal by \cite{BerestEtingofGinzburg}, so by Lemma \ref{PIM} the vertex of $P_{(e)}$ is $S_e$. Now the claim follows from Lemma \ref{lb proj res}.
\end{proof}

\begin{theorem}\label{wt w}
Let $L_\lambda$ be any simple module in a weight $w$ block $\cB_{\rho,w}$ of $\cO_c(S_n)$. Then the vertex of $L_\lambda$ is $S_e^{\times w}$.
\end{theorem}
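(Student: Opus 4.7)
By Theorem \ref{VerticesCherednik}, the vertex of $L_\lambda$ is contained in $S_e^{\times w}$; the plan is to establish the reverse containment via the contrapositive of Lemma \ref{lb proj res}. Specifically, if some indecomposable projective summand $P_\mu$ of a minimal projective resolution $P_\bullet$ of $L_\lambda$ has vertex $S_e^{\times w}$, then the vertex of $L_\lambda$ cannot be contained in any proper subparabolic of $S_e^{\times w}$, giving equality. By Theorem \ref{VerticesCherednik} combined with Lemma \ref{PIM}, the projectives in $\cB_{\rho,w}$ with vertex $S_e^{\times w}$ are precisely the $P_\mu$ for $\mu = e\sigma + \rho$ with $\sigma \vdash w$; call such labels \emph{deep}. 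The task thus reduces to showing that for every simple $L_\lambda \in \cB_{\rho,w}$, the minimal projective resolution of $L_\lambda$ contains a summand $P_\mu$ with $\mu$ deep, equivalently that $\Ext^i_{\cO_c(S_n)}(L_\lambda, L_\mu) \neq 0$ for some $i \geq 0$ and some deep $\mu$.

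To execute this, I would extend the approach of Lemma \ref{wt 1}, which for $w = 1$ identifies the final nonzero term of the minimal projective resolution of any simple in $\cB_{\emptyset,1}$ as $P_{(e)}$. For general $w$, one expects the last nonzero term of the minimal resolution of $L_\lambda$ to contain a deep $P_\mu$. A uniform way to proceed is to first reduce to the principal block $\cB_{\emptyset,w}$ of $\cO_c(S_{ew})$ via a Chuang-Rouquier-type derived (or Morita) equivalence between weight-$w$ blocks, which preserves the relevant $\Ext$-structure and the labeling of deep projectives, and then induct on $w$ with Lemma \ref{wt 1} as base case. For the inductive step I would use the Shan-Vasserot categorical $\widehat{\mathfrak{sl}}_e$- and Heisenberg actions on $\bigoplus_n \cO_c(S_n)$ to transfer deep projective summands from lower-weight blocks into $\cB_{\emptyset,w}$, exploiting Wilcox's semisimple Serre-subquotient description of the deep simples (equivalent to $\C[S_w]\text{-mod}$) to pin down their cohomological role.

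The main obstacle is verifying that a deep projective genuinely appears in the minimal projective resolution of every simple in $\cB_{\rho,w}$. This requires either controlling how the categorical actions interact with minimal projective resolutions, or a direct cohomological computation in the block using its highest-weight (and, conjecturally, Koszul) structure.
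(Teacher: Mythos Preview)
Your plan diverges from the paper's and leaves the decisive step open. You correctly extract the upper bound from Theorem \ref{VerticesCherednik} and then try to get the lower bound via Lemma \ref{lb proj res}, which would require that every minimal projective resolution in $\cB_{\rho,w}$ contains a deep summand $P_\mu$. You do not prove this: the Chuang--Rouquier reduction and Shan--Vasserot actions you propose would have to be shown to preserve the property ``the minimal resolution contains a deep projective,'' and nothing in your outline does that (derived equivalences do not in general match minimal resolutions, and composition-factor control under the categorical action does not give summand control in resolutions). This is exactly the obstacle you flag, and it is the whole content of the lower bound.

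The paper sidesteps this entirely. Lemma \ref{lb proj res} is used only in weight $1$ (Lemma \ref{wt 1}), where the structure of $\cB_{\emptyset,1}\subset\cO_c(S_e)$ is completely explicit and one sees directly that $P_{(e)}$ terminates every minimal resolution. Tensoring, every simple $\underline L$ in the principal block $\cB_{\emptyset,1}^{\otimes w}$ of $\cO_c(S_e^{\times w})$ has vertex $S_e^{\times w}$. For an arbitrary $L_\lambda\in\cB_{\rho,w}$, the proof of Theorem \ref{VerticesCherednik} already gives $L_\lambda\mid{^\cO\Ind}^{S_n}_{S_e^{\times w}}\underline M$ with $\underline M$ in that principal block; the paper then reduces $\underline M$ to a simple $\underline L$ by downward induction on composition length (induce a non-split short exact sequence with $\underline M$ in the middle and note that the simple summand $L_\lambda$ of the induced middle term must split off one of the induced outer terms). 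The vertex of $L_\lambda$ is then identified with the vertex of this $\underline L$, namely $S_e^{\times w}$. Thus the paper never touches projective resolutions in $\cB_{\rho,w}$ for $w>1$; it bootstraps from the explicit weight-$1$ case together with an elementary summand-of-induced argument downstairs.
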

\begin{proof}
Lemma \ref{wt 1} implies that any simple module $\underline{L}$ 
 in the principal block $\cB_{\emptyset,1}^{\otimes w}$ of $\cO_c(S_e^{\times w})\simeq \cO_c(S_e)^{\otimes w}$ has vertex $S_e^{\times w}$. The proof of Theorem \ref{VerticesCherednik} showed that $L_\lambda\mid{^\cO\Ind}_{S_e^{\times w}}^{S_n} \underline{M} $ for some $\underline{M}$ in the principal block of $\cO_c(S_e^{\times w})$. We may always take $\underline{M}$ to be some simple module $\underline{L}$. Indeed, if $\underline{M}$ is not simple, then induce a non-split short exact sequence in which it appears in the middle, $L_\lambda$ is a direct summand of the middle term of the exact induced sequence, thus $L_\lambda$ is a summand of one of the outer terms, then do downwards induction on the composition length. The vertex of $L_\lambda$ is then the vertex of some simple module $\underline{L}\in\cB_{\emptyset,1}^{\otimes w}$, so by Lemma \ref{wt 1} it is $S_e^{\times w}$.
\end{proof}

\noindent
\textbf{Acknowledgments.}
The author is indebted to Olivier Dudas for helpful discussions and comments, and for the reference to Brou\'{e}'s generalization of Higman's criterion \cite[Theorem 6.8]{Broue}. Thanks also to Chris Bowman for answering a question about Littlewood-Richardson coefficients. The author was supported financially by MPIM Bonn at the time of writing. 
The author is also grateful to the anonymous referee for suggestions to clarify the writing.

\end{document}